\tikzset{%
  dot/.style={circle, draw, fill=black, inner sep=0pt, minimum width=3pt},
}
\newtheorem{thm}{Theorem}[section]
\newtheorem{lmm}[thm]{Lemma}
\newtheorem{crl}[thm]{Corollary}
\newtheorem{prp}[thm]{Proposition}
\newtheorem{fct}[thm]{Fact}
\theoremstyle{definition}
\theoremstyle{remark}
\newtheorem{qst}[thm]{Question}
\let\phi\varphi
\let\epsilon\varepsilon
\let\emptyset\varnothing
\let\subset\subseteq
\let\nsubset\nsubseteq
\newcommand{\mcal}{\mathcal}
\newcommand{\bb}{\mathbb}
\newcommand{\msf}{\mathsf}
\newcommand{\sr}{\mathscr}
\newcommand{\mrm}{\mathrm}
\newcommand{\fr}{\mathfrak}
\newcommand{\mb}{\boldsymbol}
\newcommand{\la}{\land}				
\newcommand{\emp}{\emptyset}		
\renewcommand{\Cup}{\bigcup}		
\newcommand{\eqv}{\equiv} 				      
\newcommand{\leftrightdelim}[4][]{\def\leftrightdelimparamunused{#1}\ifx\leftrightdelimparamunused\empty\mathopen{}\left#2#4\right#3\mathclose{}\else\mathopen{}\big#2#4\big#3\mathclose{}\fi}
\newcommand{\leftrightseparator}[1][]{\def\leftrightseparatorparamunused{#1}\ifx\leftrightseparatorparamunused\empty~\middle|~\else~\big|~\fi}
\newcommand{\rb}[2][]{\leftrightdelim[#1]{(}{)}{#2}}
\newcommand{\ac}[2][]{\leftrightdelim[#1]{\{}{\}}{#2}}
\newcommand{\card}[2][]{\leftrightdelim[#1]{|}{|}{#2}}
\newcommand{\norm}[2][]{\leftrightdelim[#1]{\lVert}{\rVert}{#2}}
\newcommand{\ab}[2][]{\leftrightdelim[#1]{\langle}{\rangle}{#2}}
\newcommand{\ap}[1]{\text{``}{#1}\text{''}} 		
\newcommand{\set}[3][]{\ac[#1]{#2\leftrightseparator[#1]#3}}
\newcommand{\abset}[3][]{\ab[#1]{#2\leftrightseparator[#1]#3}}
\DeclareRobustCommand{\ssn}{\text{\textswab{s}\kern-.1em$\mathfrak{z}$}}
\newcommand{\srSS}{\sr S\!\!\sr S}
\newcommand{\srOS}{\sr O\!\sr S}
\newcommand{\srED}{\sr E\!\sr D}
\newcommand{\dom}{\mrm{dom}}
\newcommand{\ran}{\mrm{ran}}
\newcommand{\cov}{\mrm{cov}}
\newcommand{\non}{\mrm{non}}
\newcommand{\cof}{\mrm{cof}}
\newcommand{\add}{\mrm{add}}
\newcommand{\omom}{{}^{\omega}\omega}
\newcommand{\fomom}{{}^{<\omega}\omega}
\newcommand{\eqi}{=^\infty}
\newcommand{\neqi}{\mathrel{\cancel{=^\infty}}}
\newcommand{\leqs}{\leq^*}
\newcommand{\leqi}{\leq^\infty}
\newcommand{\nleqs}{\mathrel{\cancel{\leq^*}}}
\newcommand{\ts}{\textstyle}
\newcommand{\acc}{\mrm{acc}}
\newcommand{\osc}{\msf{osc}}
\newcommand{\icoi}{[\omega]^\omega_\omega}
\newcommand{\eqisig}{\eqi_\sigma}
\newcommand{\neqisig}{\mathrel{\neqi\hspace{-4mm}{}_{\sigma}\hspace{1mm}}}
\title{Subseries Numbers for Convergent Subseries}
\author{Tristan van der Vlugt}
\thanks{This research was funded by the Austrian
Science Fund (FWF), grants 10.55776/P33895 and 10.55776/PAT8484324.}
\address{Institut für Diskrete Mathematik und Geometrie, TU Wien, Wiedner Hauptstrasse 8-10/104, 1040, Wien, Austria}
\email{tristan@tvdvlugt.nl}
\subjclass{Primary 03E17; Secondary 03E05, 03E35}
\keywords{Subseries, Conditionally convergent series, Riemann rearrangement theorem, Cardinal invariant, Cardinal characteristic, Relational system}
\begin{document}

\begin{abstract}
    Every conditionally convergent series of real numbers has a subseries that diverges. The subseries numbers, previously studied in \cite{Subseries}, answer the question how many subsets of the natural numbers are necessary, such that every conditionally convergent series has a subseries that diverges, with the index set being one of our chosen sets. By restricting our attention to subseries generated by an index set that is both infinite and coinfinite, we may ask the question where the subseries have to be convergent. The answer to this question is a cardinal characteristic of the continuum. 
    
    We consider several closely related variations to this question, and show that our cardinal characteristics are related to several well-known cardinal characteristics of the continuum. In our investigation, we simultaneously will produce dual results, answering the question how many conditionally convergent series one needs such that no single infinite coinfinite set of indices makes all of the series converge.
\end{abstract}

\maketitle


\section{Introduction}

Given a conditionally convergent series $\sum_{i\in\omega}a_i$, we can always find some infinite set $X\subset\omega$ of indices such that the subseries $\sum_{i\in X}a_i$ no longer converges. Sparked by a question from Mohammadpour on MathOverflow,\footnote{See: \href{https://mathoverflow.net/questions/221106/}{https://mathoverflow.net/questions/221106/}}  Brendle,   Brian and Hamkins \cite{Subseries} wrote an article answering the question how large a family $\mcal X\subset[\omega]^\omega$ of infinite sets of natural numbers should be such that for every conditionally convergent series $\sum_{i\in \omega}a_i$ there exists some $X\in\mcal X$ for which $\sum_{i\in X}a_i$ is no longer convergent. The minimal cardinality of such a family $\mcal X$ is called the \emph{subseries number}. It turns out that the subseries number is a cardinal characteristic of the continuum, that is, a cardinality between $\aleph_1$ and $2^{\aleph_0}$ whose size is consistently different from either bound, and hence independent from $\msf{ZFC}$. 
In their article, Brendle, Brian and Hamkins consider some variations of the subseries number depending on the convergence behaviour. For instance, they consider he number of subsets needed such that every conditionally convergent series can be made to diverge by tending to infinity.

A closely related family of cardinal characteristics is the family of \emph{rearrangement numbers}, studied by Blass, Brendle, Brian, Hamkins, Hardy and Larson in \cite{Rearrangement}. The rearrangement number answers a similar question: how many permutations $\pi$ of $\omega$ are necessary such that for each conditionally convergent series $\sum_{i\in\omega}a_i$ the permutation of the indices results in a series  $\sum_{i\in\omega}a_{\pi(i)}$ that no longer converges to the same limit. Note that Riemann's rearrangement theorem \cite{Riemann} states that any conditionally convergent series can be rearranged to converge to a different limit or to diverge, which makes this notion well-defined.

One particular variation on this theme, is the rearrangement number $\fr{rr}_f$, being the least cardinality of a set $P$ of permutations on the natural numbers, such that for every conditionally convergent series $\sum_{i\in\omega}a_i$ there is a permutation $\pi\in P$ for which $\sum_{i\in\omega}a_{\pi(i)}$ is still convergent, but has a different limit than the original series. It is noted in the paper on the subseries number that a similar subseries number is not very interesting, as it is easily observed that for every conditionally convergent series $\sum_{i\in\omega}a_i$ there is some $n\in\omega$ such that $\sum_{i\in \omega\setminus \ac n}a_i$ converges to a different limit (namely, by letting $n$ be any index such that $a_{n}\neq 0$), and thus the countable set $\mcal X=\set{\omega\setminus \ac n}{n\in\omega}$ is a witness for this potential variation on the subseries number.

In our article we will show that there are ways to define a subseries number such that questions related to convergent subseries produce interesting cardinal characteristics. Our deviation from the original article, is that we will not consider all infinite subsets of $\omega$, but only those subsets that are also coinfinite, that is, whose complement is infinite as well.

We will also investigate dual cardinals to the subseries numbers. These dual subseries numbers answer questions such as: how many conditionally convergent series are needed such that no single infinite subset of $\omega$ makes all of them divergent? As with the subseries numbers, the dual subseries numbers are usually cardinal characteristics of the continuum. Relational systems and Tukey connections between them allow us to show relations between cardinal characteristics and their duals simultaneously, so we will prove our relations in this framework where possible.

\subsection*{Outline of the Paper}

After we introduce our notation in \Cref{sec:preliminaries}, as well as a reminder about relational systems and Tukey connections, we briefly discuss in \Cref{sec:coinfinite harmless}  why the change from considering infinite coinfinite subsets of $\omega$, instead of just infinite subsets, does not affect the cardinality of the original subseries numbers defined in \cite{Subseries}. 

In \Cref{sec:convergent subseries}, we will investigate the cardinals $\ssn_{c}$, $\ssn_{cc}$ and $\ssn_{ac}$, defined as subseries numbers for convergent subseries, conditionally convergent subseries, and absolutely convergent subseries respectively. We will show that $\cov(\mcal M)\leq \ssn_c\leq\ssn_{cc}\leq\ssn_{ac}=\fr d$ and that $\fr s\leq\ssn_{cc}$. 

In \Cref{sec:specific limits} we consider subseries numbers $\ssn_e$ and $\ssn_l^A$ that are related to subseries that converge to a specific fixed (set of) limit(s). We provide some sets of limits such that these cardinals are provably equal to either $\fr c$ or $\ssn_c$.

In \Cref{sec:unconditional series} we consider a subseries number $\ssn_n$ related to subseries of conditionally convergent series that are still conditional, but not necessarily convergent. We show that $\ssn_n=3$ for rather pathological reasons. Additionally we consider a subseries number $\ssn_c^{ui}$ of subseries that diverge by tending to infinity unconditionally, and show that it equals $\fr d$.

Finally, in \Cref{sec:dual bonus} we give an overview of the dual cardinals $\ssn_{i,o}^\bot$, $\ssn_o^\bot$ and $\ssn_i^\bot$ to the original subseries numbers from \cite{Subseries}. We modify one of the original proofs in order to show that the dual result holds.


\section{Preliminaries}\label{sec:preliminaries}

The focus of this article is on cardinal characteristics of the continuum, which are generally studied in the context of set theory. We will therefore try to follow the notation from standard set-theoretic literature, such as \cite{Jech} or \cite{Kunen}.

We will denote the set of real numbers by $\bb R$, the set of rational numbers by $\bb Q$ and the set of natural numbers by $\omega$. We adopt the convention that $0$ is a natural number and identify each natural number $n\in\omega$ with the set $n=\ac{0,\dots,n-1}$ of natural numbers strictly below $n$. For real numbers $c\in\bb R$ we let $|c|$ denote the absolute value of $c$. We also write $\bb R_{>0}$ for the set of positive real numbers.

If $X,Y$ are sets, we write ${}^XY$ for the set of functions $X\to Y$ and we define ${}^{<\omega}Y$ as $\Cup_{n\in\omega}{}^n Y$. If $\kappa$ is a cardinality, then $[X]^\kappa$ is the set of subsets of $X$ of size $\kappa$, and $[X]^{<\kappa}$ is the set of subsets of $X$ of size strictly smaller than $\kappa$. We write $[X]^\omega$ for the subsets of $X$ of size $|\omega|=\aleph_0$, and note that if $X$ is countable, then $[X]^\omega$ is the set of all infinite subsets of $X$. If $X$ is countably infinite, then we say that $x\in[X]^\omega$ is \emph{coinfinite} if $X\setminus x\in[X]^\omega$ as well. We introduce the notation $[X]^\omega_\omega$ for set of \emph{infinite coinfinite} subsets of $X$.
We let $A\mathrel\triangle B=(A\setminus B)\cup (B\setminus A)$ denote the symmetric difference between $A$ and $B$.

\subsection{Infinite Series}

For sequences of real numbers, we use boldface to denote the entire sequence, and lightface with subscript indices to denote individual terms. For example, we write $\mb a=\abset{a_i}{ i\in\omega}$ for a sequence in ${}^\omega\bb R$. Let $\fr S\subset{}^\omega\bb Q$ be the set of all sequences $\mb a\in{}^\omega\bb Q$ such that $\mb a$ vanishes at infinity, i.e.\ for each $\epsilon>0$ there is $n\in\omega$ such that $|a_i|<\epsilon$ for all $i\geq n$.

Given an increasing enumeration $\abset{i_n}{ n\in\omega}$ of some $X\in[\omega]^\omega$, we will define the restriction $\mb a\restriction X=\abset{a_{i_n}}{ n\in \omega}$. It is clear that if $\mb a\in\fr S$, then $\mb a\restriction X\in\fr S$ as well.
For $\mb a\in\fr S$, $X\in[\omega]^\omega$ and $k\in\omega$, we will write the partial sum of all elements in $X$ smaller than $k$ as:
\begin{align*}
	\ts\sum_{X\cap k}\mb a=\underset{\substack{i\in X\\i<k}}{\sum}a_i.
\end{align*}
In order to discuss the convergence of sequences $\mb a\in\fr S$, we study the accumulation points of the sequence of partial sums $\mb x=\abset{\sum_k\mb a}{ k\in\omega}$, where we note that $\mb x\in{}^\omega\bb R$. A real number $c\in\bb R$ is an \emph{accumulation point} of a sequence $\mb x\in{}^\omega\bb R$ if for every $\epsilon>0$ there are infinitely many $n\in\omega$ with $|x_n-c|<\epsilon$. Moreover, we say $\infty$ (respectively $-\infty$) is an \emph{accumulation point} of $\mb x$ if for every $k\in\omega$ there is $n\in\omega$ such that $x_n>k$ (respectively $-x_n>k$). Let $\bb R_\infty=\bb R\cup\ac{\infty,-\infty}$. Given $\mb x\in{}^\omega\bb R$, we define $\acc(\mb x)\subset\bb R_\infty$ to be the set of all accumulation points of $\mb x$. 

Given $\mb a\in\fr S$, if $\acc(\abset{\sum_{k}\mb a}{ k\in\omega})=\ac{c}$ for some $c\in\bb R_\infty$, then we say that $c$ is the \emph{limit of the series} $\sum\mb a$; in other words, $c$ is the unique accumulation point of the sequence of partial sums of $\mb a$. In this case, we will write $\sum\mb a=c$ for notational convenience and say that $\sum\mb a$ \emph{tends to a limit}. If, on the other hand, $\acc(\abset{\sum_{k}\mb a}{ k\in\omega})$ is not a singleton, we say that \emph{the series} $\sum\mb a$ \emph{has no limit} or that $\sum\mb a$ \emph{diverges by oscillation}, and for notational convenience we write $\sum\mb a=\osc$. Here $\osc$ is merely a notational convenience and does not indicate a specific value of the infinite series $\sum \mb a$. 

If $\mb a\in\fr S$ and $X\subset\omega$, we define the following notation: 
\begin{align*}
	\ts\sum_X\mb a&=\ts\sum a\restriction X,\\
	P_{\mb a}&=\set{i\in\omega}{a_i>0},\\
	N_{\mb a}&=\set{i\in\omega}{a_i<0},\\
	Z_{\mb a}&=\set{i\in\omega}{a_i=0}.
\end{align*}
We will define several types of convergence and divergence. Let $\mb a\in\fr S$, then $\sum\mb a$ ...
\begin{itemize}[label={...}]
	\item is \emph{convergent} if it tends to a limit in $\bb R$, and \emph{divergent} otherwise.
	\item is \emph{conditional} if $\sum_{P_{\mb a}}\mb a=\infty$ and $\sum_{N_{\mb a}}\mb a=-\infty$.
	\item is \emph{conditionally convergent} if it is both convergent and conditional.
	\item is \emph{absolutely} (or \emph{unconditionally}) \emph{convergent} if it is convergent and not conditional.
	\item \emph{tends to infinity} if it tends to a limit in $\ac{\infty,-\infty}$, that is, $\sum\mb a=\pm\infty$.
	\item is \emph{divergent by oscillation} if has no limit, that is, $\sum\mb a=\osc$.
\end{itemize}
Under our notation, we recall the well-known facts that a series $\mb a$ is divergent if and only if $\sum \mb a=\pm\infty$ or $\sum\mb a=\osc$, and that $\sum\mb a$ is absolutely convergent if and only if $\sum|\mb a|$ is convergent, where $|\mb a|=\abset{|a_i|}{ i\in\omega}$ denotes the absolute values of $\mb a$.

Each of these forms of convergence or divergence, provides us with a subset of $\fr S$.
\begin{align*}
	\fr S_c&=\set{\mb a\in\fr S}{ \ts\sum\mb a\text{ is convergent}},\\
	\fr S_{cc}&=\set{\mb a\in\fr S}{ \ts\sum\mb a\text{ is conditionally convergent}},\\
	\fr S_{ac}&=\set{\mb a\in\fr S}{ \ts\sum\mb a\text{ is absolutely convergent}},\\
	\fr S_i&=\set{\mb a\in\fr S}{ \ts\sum\mb a\text{ tends to infinity}},\\
	\fr S_o&=\set{\mb a\in\fr S}{ \ts\sum\mb a\text{ diverges by oscillation}}.
\end{align*} 
We will also fix the convention for the above classes that multiple subscripts separated by commas are the union of the individual classes. For example, $\fr S_{o,i}=\fr S_o\cup\fr S_i$ is the set of all $\mb a\in\fr S$ that are divergent.
The following proposition is trivial to check.

\begin{prp}\label{lmm:decomposition}
	We have $\fr S=\fr S_{c,i,o}$ and $\fr S_c=\fr S_{cc,ac}$, and note that all unions are in fact disjoint unions.
\end{prp}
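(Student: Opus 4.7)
The plan is to unwind the definitions in two cases, using the standard fact that any real sequence has at least one accumulation point in $\bb R_\infty$. For the decomposition $\fr S=\fr S_{c,i,o}$, I fix $\mb a\in\fr S$ and consider its sequence of partial sums $\mb x=\abset{\sum_{k}\mb a}{k\in\omega}\in{}^\omega\bb R$. First I would observe that $\acc(\mb x)\subset\bb R_\infty$ is always nonempty: if $\mb x$ is bounded in $\bb R$, then Bolzano--Weierstrass supplies a real accumulation point; if $\mb x$ is unbounded above (resp.\ below), then $\infty$ (resp.\ $-\infty$) is an accumulation point of $\mb x$ by the paper's definition. With nonemptiness secured, I case-split on the cardinality and location of $\acc(\mb x)$. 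If $\acc(\mb x)$ is not a singleton, then by definition $\sum\mb a=\osc$, placing $\mb a\in\fr S_o$. If $\acc(\mb x)=\ac c$ with $c\in\bb R$, then $\sum\mb a$ tends to a limit in $\bb R$, so $\mb a\in\fr S_c$. Otherwise $\acc(\mb x)=\ac c$ with $c\in\ac{\infty,-\infty}$, so $\sum\mb a=\pm\infty$ and $\mb a\in\fr S_i$. These three cases are mutually exclusive (a singleton cannot equal a non-singleton, and $\bb R\cap\ac{\infty,-\infty}=\emp$) and exhaustive, so $\fr S=\fr S_c\sqcup\fr S_i\sqcup\fr S_o$.

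For the second identity $\fr S_c=\fr S_{cc,ac}$, I would simply read off the definitions: $\fr S_{cc}$ is defined as $\fr S_c$ intersected with the class of conditional series, while $\fr S_{ac}$ is $\fr S_c$ intersected with the complement of the conditional class. Hence $\fr S_{cc}$ and $\fr S_{ac}$ are disjoint, and every $\mb a\in\fr S_c$ is either conditional or not, giving $\fr S_c=\fr S_{cc}\sqcup\fr S_{ac}$. There is no real obstacle here; the only non-tautological ingredient in the whole proof is the nonemptiness of $\acc(\mb x)$, which is a standard compactness fact in $\bb R_\infty$. This matches the author's remark that the proposition is ``trivial to check''.
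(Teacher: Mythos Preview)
Your proposal is correct and is essentially what the paper has in mind: the proposition is stated without proof and labelled ``trivial to check'', and your argument---nonemptiness of $\acc(\mb x)$ in $\bb R_\infty$ followed by a case-split on whether $\acc(\mb x)$ is a singleton and where that singleton lies, together with the tautological conditional/unconditional dichotomy for the second identity---is exactly the intended verification.
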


\subsection{Relational Systems}\label{sec:relational systems}

In this section we will introduce a very powerful tool that will aid us in providing an order (of cardinality) between certain cardinal characteristics. Most of the cardinal characteristics discussed in this article can be described as the norm of a relational system, and the existence of a morphism between relational systems implies an order between the norms. We refer to \cite[\S~4]{Blass} for a detailed exposition on relational systems.

A \emph{relational system} is a triple $\sr X=\ab{X,Y,R}$ consisting of sets $X,Y$ and a relation $R\subset X\times Y$. We call $X$ the set of \emph{challenges} and $Y$ the set of \emph{responses}, and we say that a response $y\in Y$ \emph{meets} a challenge $x\in X$ if $(x,y)\in R$. 
The \emph{norm} $\norm{\sr X}$ of $\sr X$ is defined as the least cardinality of a set $B\subset Y$ of responses such that any challenge is met by at least one response from $B$. The \emph{dual} $\sr X^\bot$ of $\sr X$ is defined as the triple $\ab{Y,X,R^\bot}$, where 
\begin{align*}
	R^\bot=\set{(y,x)\in Y\times X}{ (x,y)\notin R}.
\end{align*}
Note that, with respect to $\sr X$, we can describe the norm $\norm{\sr X^\bot}$ as the least cardinality of a set of challenges $A\subset X$ such that no single response will meet all of the challenges from $A$.

A \emph{Tukey connection} is a pair of maps between relational systems $\sr X=\ab{X,Y,R}$ and $\sr X'=\ab{X',Y',R'}$, specifically \begin{align*}
	&\rho_-:X\to X',\\
	&\rho_+:Y'\to Y,
\end{align*}
such that if $(\rho_-(x),y')\in R'$ for $x\in X$ and $y'\in Y'$, then $(x,\rho_+(y'))\in R$. The existence of a Tukey connection is written as $\sr X\preceq \sr X'$. Two systems $\sr X$ and $\sr X'$ are \emph{Tukey equivalent} if both $\sr X\preceq\sr X'$ and $\sr X'\preceq\sr X$, which we will abbreviate as $\sr X\equiv\sr X'$. The existence of a Tukey connection provides us with an ordering between the norms of the relational systems.

\begin{prp}
	If $\sr X\preceq\sr X'$, then $\norm{\sr X}\leq\norm{\sr X'}$ and $\norm{\sr X'^\bot}\leq\norm{\sr X^\bot}$.
\end{prp}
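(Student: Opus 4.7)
The plan is to prove each inequality by a standard diagram chase exploiting the two maps of the Tukey connection. For $\norm{\sr X}\leq\norm{\sr X'}$, I will push a witness for $\sr X'$ forward along $\rho_+$; for $\norm{\sr X'^\bot}\leq\norm{\sr X^\bot}$, I will push a witness for $\sr X^\bot$ forward along $\rho_-$.

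More concretely, for the first inequality I would start with a set $B'\subset Y'$ of minimum cardinality meeting every challenge in $X'$ and propose $B:=\rho_+[B']$ as a witness for $\sr X$. For any $x\in X$ the image $\rho_-(x)\in X'$ is met by some $y'\in B'$, and the defining property of the Tukey connection then immediately yields $(x,\rho_+(y'))\in R$, so $B$ meets every challenge in $X$. Since $|B|\leq|B'|$, the bound follows.

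For the dual inequality I would start with a set $A\subset X$ of minimum cardinality such that no single $y\in Y$ meets every element of $A$, and propose $A':=\rho_-[A]$ as a witness for $\sr X'^\bot$. If some $y'\in Y'$ met every element of $A'$, i.e.\ $(\rho_-(x),y')\in R'$ for all $x\in A$, then the Tukey property would give $(x,\rho_+(y'))\in R$ for all $x\in A$, contradicting the choice of $A$. Hence $A'$ is a valid witness and $|A'|\leq|A|$.

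No real obstacle arises here: the argument is purely formal. The only thing requiring care is the direction of the Tukey implication, namely that a relation in $R'$ between $\rho_-(x)$ and some $y'$ pulls back to a relation in $R$ between $x$ and $\rho_+(y')$; once this is kept straight, both inequalities fall out of a single-line push-forward construction.
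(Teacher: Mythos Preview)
Your argument is correct and is the standard proof. The paper itself does not supply a proof of this proposition: it states the result without proof, having already referred the reader to \cite[\S~4]{Blass} for the general theory of relational systems and Tukey connections, so your write-up is in fact more detailed than what appears in the paper.
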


For our subseries numbers, we define the relation $S_{(\cdot)}\subset\fr S\times\icoi$  where $(\mb a,X)\in S_{(\cdot)}$ if and only if $\mb a\restriction X\in\fr S_{(\cdot)}$. For example, $(\mb a,X)\in S_i$ if and only if $\sum_X\mb a$ tends to infinity. We define the relational systems
\begin{align*}
	\srSS^{(*)}_{(\cdot)}=\ab{\fr S_{(*)},\icoi,S_{(\cdot)}}\text{ with norm }\norm{\srSS_{(\cdot)}^{(*)}}=\ssn^{(*)}_{(\cdot)}.
\end{align*}
The symbol $\ssn$ used for the cardinality of the norm is the German letter \emph{eszett}, used in most of the German dialects as a ligature for a \emph{double s}. Since we will usually have the set $\fr S_{cc}$ as our set of challenges and never use $\fr S$ as our set of challenges, we will write $\ssn_{(\cdot)}$ instead of  $\ssn^{cc}_{(\cdot)}$. This also agrees with notation from earlier papers, where the cardinals $\ssn_i$, $\ssn_{o}$, and $\ssn_{i,o}$ were written as $\text{\ss}_i$, $\text{\ss}_{o}$, and $\text{\ss}_{io}$, respectively. Our deviation from previous notation is to use a fraktur font for the cardinal characteristics (as is common), and to separate the subscripts with commas (to avoid ambiguity).%
\footnote{As far as the author is aware, there is no \emph{eszett} present in the existing \texttt{\textbackslash mathfrak} alphabet. Nevertheless, a reasonably close approximation is achievable by modifying the kerning between the fraktur letters $\mathfrak f$ and $\mathfrak z$: ``\texttt{\textbackslash mathfrak f\textbackslash kern-.15em\textbackslash mathfrak z}'' produces $\mathfrak f\kern-.2em\mathfrak z$. Of course, instead of $\mathfrak f$, one should really be using a fraktur font supporting the \emph{medial s} ( \scalebox{.7}{\textswab{s}} ), yielding the almost unnoticably superior \scalebox{.7}{\textswab{s}}\kern-.13em$\fr z$. However, obtaining a \emph{medial s} comes at the cost of having to import an additional package, such as \texttt{yfonts}.} %
%
%
%
Mathematically speaking, the definition of the cardinals is also slightly different, but we will argue in \Cref{sec:coinfinite harmless} that this difference is inconsequential.

We can make some simple observations about relational systems. Note that if $X\subset X'$, $Y'\subset Y$ and $R'\subset R\subset X'\times Y$, then for the relational systems $\sr X=\ab{X,Y,R}$ and $\sr X'=\ab{X',Y',R'}$ we have $\sr X\preceq\sr X'$, as witnessed by the Tukey connection consisting of identity functions. Consequently, decompositions such as those given in \Cref{lmm:decomposition} immediately give us a relation between the relevant subseries numbers. Specifically we observe:
\begin{fct}\label{trivial inference}
	$\srSS_{(\cdot\cdot)}^{(*)}\preceq\srSS_{(\cdot)}^{(**)}$ holds whenever $\fr S_{(\cdot)}\subset \fr S_{(\cdot\cdot)}$ and $\fr S_{(*)}\subset \fr S_{(**)}$.
\end{fct}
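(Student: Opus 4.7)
The proof is a direct definition-chase; indeed the fact is essentially the principle, already spelled out in the paragraph preceding the statement, that shrinking the challenge set and enlarging the relation can only make the Tukey order go down. The plan is simply to write out the Tukey connection explicitly using identity maps on the underlying sets and verify the implication on the relations.

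First, I would take $\rho_-:\fr S_{(*)}\to \fr S_{(**)}$ to be the inclusion, which is well-defined by the hypothesis $\fr S_{(*)}\subset\fr S_{(**)}$, and $\rho_+:\icoi\to\icoi$ to be the identity. Both are literally identity-type maps on the underlying sets.

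Second, I would verify the Tukey condition: suppose $\mb a\in\fr S_{(*)}$ and $Z\in\icoi$ are such that $(\rho_-(\mb a),Z)\in S_{(\cdot)}$. Unpacking the definition of $S_{(\cdot)}$, this says $\mb a\restriction Z\in\fr S_{(\cdot)}$. By the hypothesis $\fr S_{(\cdot)}\subset\fr S_{(\cdot\cdot)}$, we immediately obtain $\mb a\restriction Z\in\fr S_{(\cdot\cdot)}$, which is exactly $(\mb a,\rho_+(Z))\in S_{(\cdot\cdot)}$, as required. Thus the pair $(\rho_-,\rho_+)$ witnesses $\srSS_{(\cdot\cdot)}^{(*)}\preceq\srSS_{(\cdot)}^{(**)}$.

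There is no real obstacle here, since both defining inclusions propagate in the correct direction. The only subtlety worth noting is that the Tukey condition needs to be verified only for $\mb a$ already in the smaller challenge set $\fr S_{(*)}$, so the reverse inclusion $\fr S_{(**)}\subset\fr S_{(*)}$ is not needed anywhere in the argument.
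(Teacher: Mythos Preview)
Your proof is correct and matches the paper's approach exactly: the paper simply observes in the paragraph preceding the Fact that whenever the challenge set shrinks and the relation enlarges, the identity functions form a Tukey connection, and this Fact is stated as an immediate consequence. Your explicit unpacking of $S_{(\cdot)}$ and $S_{(\cdot\cdot)}$ to verify $S_{(\cdot)}\subset S_{(\cdot\cdot)}$ from $\fr S_{(\cdot)}\subset\fr S_{(\cdot\cdot)}$ is precisely the missing detail.
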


\section{Why the Old Cardinals are Unchanged}\label{sec:coinfinite harmless}

Before we investigate the new subseries numbers related to convergence, we will shortly discuss the original subseries numbers from \cite{Subseries}, and the minor changes that we have made. We argue that these changes are harmless, and do not lose any generality with regard to the results from the original paper.

In our paper we have defined $\fr S$ to be the set of sequences of rational numbers (i.e.\ ${}^\omega\bb Q$), instead of the more general sequences of real numbers (i.e.\  ${}^\omega\bb R$). 
We remark that if $\mb a\in{}^\omega\bb R$ then there exists $\mb a'\in{}^\omega\bb Q$ and $\mb c\in{}^\omega\bb R$ such that $\sum\mb c$ is absolutely convergent and $a_i=a'_i+c_i$ for each $i\in\omega$. Since the relevant convergence behaviours (conditional convergence, tending to infinity and divergence by oscillation) are all invariant under termwise addition with an absolutely convergent series, it follows that $\mb a$ and $\mb a'$ have equivalent convergence behaviour. Therefore, we lose no generality by restricting our attention to sequences of rational terms.

Now let us discuss another difference between our subseries numbers and those from the original paper. The original subseries numbers can be defined as the norms of the following three relational systems:
\begin{align*}
	\srOS_i^{cc}&=\ab{\fr S_{cc},[\omega]^\omega,S_i}\\
	\srOS_o^{cc}&=\ab{\fr S_{cc},[\omega]^\omega,S_o}\\
	\srOS_{i,o}^{cc}&=\ab{\fr S_{cc},[\omega]^\omega,S_{i,o}}
\end{align*}

That is, the sets of responses consists of infinite sets, $[\omega]^\omega$, whereas this paper uses infinite \emph{coinfinite} sets, $\icoi$, as the sets of responses. It is easy to see that these relational systems are Tukey equivalent to the systems $\srSS_i^{cc}$, $\srSS_o^{cc}$ and $\srSS_{i,o}^{cc}$, respectively. 
To see this, note that if $\sum\mb a$ is convergent and $X$ is a cofinite subsets of $\omega$, then $\sum_X\mb a$ is still convergent. Therefore, the only responses $X\in [\omega]^\omega$ for which a challenge $\mb a$ can be met (that is, $\sum_X\mb a=\pm\infty$ or $\sum_X\mb a=\osc$, depending which relational system we work with), are those responses that were coinfinite to begin with. 

It is exactly this point that makes relational systems of the original type with respect to convergent series, such as $\srOS_c^{cc}=\ab{\fr S_{cc},[\omega]^\omega,S_c}$, rather uninteresting as well: the singleton $\ac{\omega}$ clearly witnesses for trivial reasons that every conditionally convergent series $\sum\mb a$ has a response $X\in\ac{\omega}$ such that $\sum_X\mb a$ is still convergent.
This triviality disappears when we study relational systems with responses from $\icoi$, such as the system $\srSS_c^{cc}$ we defined in \Cref{sec:relational systems}.

\section{Convergent Subseries}\label{sec:convergent subseries}

In this section we will take a look at the norm $\ssn_c$ of $\srSS_c^{cc}$, or, the least cardinality of a family $\mcal X\subset\icoi$ such that for all sequences $\mb a\in\fr S_{cc}$ there is some $X\in\mcal X$ for which $\sum_X\mb a$ is convergent. We will also discuss the closely related cardinals $\ssn_{cc}$ and $\ssn_{ac}$, which need the subseries $\sum_X\mb a$ to be conditionaly and absolutely convergent, respectively. 

We trivially deduce (as in \Cref{trivial inference}) that $\ssn_c$ is the smallest of these cardinals, and analogously $\ssn_c^\bot$ is the largest of the dual cardinal characteristics.

\begin{prp}
	$\ssn_c\leq\min\ac{\ssn_{cc},\ssn_{ac}}$ and $\ssn_c^\bot\geq\max\ac{\ssn_{cc}^\bot,\ssn_{ac}^\bot}$.
\end{prp}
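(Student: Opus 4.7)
The plan is to observe that this is an immediate application of \Cref{trivial inference} together with the norm-ordering from Tukey connections. By \Cref{lmm:decomposition} we have the disjoint decomposition $\fr S_c=\fr S_{cc,ac}=\fr S_{cc}\cup\fr S_{ac}$, so in particular $\fr S_{cc}\subset\fr S_c$ and $\fr S_{ac}\subset\fr S_c$.

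First, I would apply \Cref{trivial inference} with $(\cdot)=cc$ and $(\cdot\cdot)=c$, keeping the challenge set $\fr S_{cc}$ fixed on both sides (so the inclusion $\fr S_{(*)}\subset\fr S_{(**)}$ is trivial), to conclude $\srSS_c^{cc}\preceq\srSS_{cc}^{cc}$. Then I would apply the same fact with $(\cdot)=ac$ and $(\cdot\cdot)=c$ to conclude $\srSS_c^{cc}\preceq\srSS_{ac}^{cc}$. By the proposition on norm-ordering from Tukey connections, the first gives $\ssn_c\leq\ssn_{cc}$ and $\ssn_{cc}^\bot\leq\ssn_c^\bot$, while the second gives $\ssn_c\leq\ssn_{ac}$ and $\ssn_{ac}^\bot\leq\ssn_c^\bot$. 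Taking minimum and maximum respectively yields both desired inequalities.

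There is no real obstacle here, since once the relational-system framework is set up and \Cref{trivial inference} is available, the statement is a formal consequence. The only thing to verify is that one reads off the correct direction of the inequality from the dual part of the norm-ordering proposition, i.e.\ that inclusions among the sets $\fr S_{(\cdot)}$ of ``successful'' subseries behaviours reverse when passing to dual norms.
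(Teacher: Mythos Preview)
Your proposal is correct and is exactly the approach the paper takes: the paper simply records that the inequalities follow trivially from \Cref{trivial inference}, and you have spelled out precisely how that fact is applied (using $\fr S_{cc}\subset\fr S_c$ and $\fr S_{ac}\subset\fr S_c$ on the relation side, with the challenge set fixed) together with the norm-ordering proposition for Tukey connections.
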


It is not hard to see that $\ssn_c$ is uncountable. Indeed, if $\mcal X\subset\icoi$ is a countable family, say, $\mcal X=\set{X_n}{ n\in\omega}$, then we may find infinitely many disjoint intervals $I_{n,k}$ for $n,k\in\omega$ such that each $I_{n,k}$ contains a strictly increasing sequence $\abset[b]{i_{n,k}^m}{ m<2k}$ with $i_{n,k}^m\in X_n$ if and only if $m$ is even. We then define 
\[
a_i=\begin{cases}
	\frac{(-1)^m}k & \text{if $i=i_{n,k}^m$ for some $n,k\in\omega$ and $m<2k$,}\\
	0 & \text{otherwise.}
\end{cases}
\]
One can quickly see that $\sum_{X_n}\mb a$ diverges, since $\sum_{X_n\cap I_{n,k}}\mb a=1$ for each $k\in\omega$. Therefore $\mcal X$ is too small to witness $\ssn_c$. A more elaborate version of this argument in fact shows that $\cov(\mcal M)\leq\ssn_c$, as we will see in the next subsection.

\subsection{Covering Number of the Meagre Ideal}\label{sec:covM and ssc}
Let us introduce the meagre ideal and the related cardinal characteristics $\cov(\mcal M)$ and $\non(\mcal M)$.
Remember that a subset of the Baire space $X\subset\omom$ is called nowhere dense if every nonempty open $A\subset\omom$ contains a nonempty open $B\subset A$ such that $B\cap X=\emp$. A subset $X\subset\omom$ is meagre if it is the countable union of nowhere dense sets. The set of meagre subsets of $\omom$ is a $\sigma$-ideal, and we will denote it by $\mcal M$.

The \emph{covering number} of the meagre ideal $\cov(\mcal M)$ is the least size of a family $M\subset\mcal M$ such that $\Cup M=\omom$. The \emph{uniformity number} of the meagre ideal $\non(\mcal M)$ is the least size of a subset of $\omom$ that is not meagre. Note that $\sr{C}_\mcal M=\ab{\omom,\mcal M,\in}$ is a relational system with $\norm{\sr C_\mcal M}=\cov(\mcal M)$ and $\norm{\sr C_\mcal M^\bot}=\non(\mcal M)$. However, for our purpose of showing that $\cov(\mcal M)\leq\ssn_c$, it will be convenient to use an alternative relational system $\srED_\sigma$ for which we can show that it has the same norm as $\sr C_\mcal M$.

A well-known theorem of Bartoszy\'nski \cite[Theorems 2.4.1 \& 2.4.7]{BartoszynskiJudah} shows that the norms of the relational system $\srED=\ab{\omom,\omom,\neqi}$ are $\norm{\srED}=\cov(\mcal M)$ and $\norm{\srED^\bot}=\non(\mcal M)$, where $x\eqi y$ if and only if there are infinitely many $k\in\omega$ with $x(k)=y(k)$.\footnote{Note that it is easy to show $\sr C_\mcal M\preceq\srED$, but that $\srED\npreceq\sr C_\mcal M$, because Zapletal \cite{ZapletalCohen} proved that it is possible to add an infinitely equal generic real with a proper forcing notion that does not add a Cohen real. Therefore, the proof that $\norm{\srED}\leq\cov(\mcal M)$ is not via a Tukey connection.} We define a stronger relation $\eqisig$ based on a function $\sigma:\fomom\to\omega$, where $x,y\in\omom$ have $x\eqisig y$ if and only if there are infinitely many $k\in\omega$ such that $x(k)=y(k)$ and $\sigma(x\restriction k)=\sigma(y\restriction k)$. We will call the function $\sigma:\fomom\to\omega$ \emph{suitable} if for every $s\in\fomom$ and $x\in\omom$ there is some $k\in\omega$ and $t\in{}^{k}\omega$ with $s\subset t$ such that $\sigma(t)=\sigma(x\restriction k)$. 

\begin{figure}[h]\small\centering
	\begin{tikzpicture}[xscale=-1]
		\node[dot] (s) at (2.2,1.5) {};
		\node[dot] (t) at (2.9,2.5) {};
		\node[dot] (xn) at (1.47,2.5) {};
		
		\node at (2.45,1.5) {$s$};
		\node at (2.9,2.8) {$t$};
		\node at (1.9,2.8) {$x\restriction k$};
		\node at (.5,2.5) {$k$};
		\node at (1,4.3) {$x$};
		\node at (2.4,3.6) {$\sigma(t)=\sigma(x\restriction k)$};
		
		\draw 
		(0,4) edge (2,0)
		(2,0) edge (4,4)
		(2,0) edge[out=85, in=-80,looseness=1.5] (s)
		(s) edge[out=100, in=-120,looseness=1.5] (t)
		(2,0) edge[out=100, in=-85,looseness=1.5] (1.6,2)
		(1.6,2) edge[out=95,in=-100,->] (1,4)
		(.75,2.5) edge[dashed] (3.25,2.5);
	\end{tikzpicture}
	\caption{A schematic representation of the definition of \emph{suitable} $\sigma:\fomom\to\omega$}
\end{figure}
We define the relational system $\srED_\sigma=\ab{\omom,\omom,\neqisig}$ for suitable $\sigma:\fomom\to\omega$. 

\begin{lmm}\label{lmm:cm eds ed}
	If $\sigma:\fomom\to\omega$ is suitable, then $\sr C_\mcal M\preceq\srED_\sigma\preceq\srED$. In particular all three relational systems share the same norms.
\end{lmm}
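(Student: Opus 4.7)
The plan is to handle the two Tukey connections separately and then read off the equality of norms from Bartoszyński's theorem.

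For $\srED_\sigma\preceq\srED$, I would take both maps in the Tukey connection to be the identity on $\omom$. Indeed, if $x\eqisig y$ then in particular $x(k)=y(k)$ for infinitely many $k$, so $x\eqi y$; contrapositively $x\neqi y$ implies $x\neqisig y$, which is exactly the morphism condition.

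For $\sr C_\mcal M\preceq\srED_\sigma$, I would take $\rho_-$ to be the identity on $\omom$ and set $\rho_+(y)=\set{x\in\omom}{x\neqisig y}$ for each $y\in\omom$. The morphism condition $\rho_-(x)\neqisig y\imp x\in\rho_+(y)$ is then tautological, so everything reduces to verifying that $\rho_+(y)$ is meagre—equivalently, that $F_y=\set{x\in\omom}{x\eqisig y}$ is comeagre. I would write $F_y=\Cap_{n\in\omega}D_n^y$ with
\[D_n^y=\set{x\in\omom}{\exists k\geq n\text{ such that }x(k)=y(k)\text{ and }\sigma(x\restriction k)=\sigma(y\restriction k)}.\]
Each $D_n^y$ is clearly open, since membership is certified by a finite initial segment of $x$. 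For density, given $s\in\fomom$ I would first extend $s$ so that $|s|\geq n$, then apply suitability of $\sigma$ to $s$ and $y$ to obtain $k\in\omega$ and $t\in{}^k\omega$ with $s\subset t$ and $\sigma(t)=\sigma(y\restriction k)$; note $k=|t|\geq|s|\geq n$. Any $x\in\omom$ with $x\restriction k=t$ and $x(k)=y(k)$ then lies in the basic open set determined by $s$ and in $D_n^y$, proving density. Hence $F_y$ is comeagre, so $\rho_+(y)\in\mcal M$.

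The equality of the norms is then a squeeze: Bartoszyński's theorem quoted just before the statement gives $\norm{\srED}=\cov(\mcal M)=\norm{\sr C_\mcal M}$, so the two Tukey connections force $\norm{\srED_\sigma}$ to lie between equal values, and dually for $\non(\mcal M)$.

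The main obstacle is the density step: the definition of $\eqisig$ demands a \emph{simultaneous} match of both $x(k)$ and $\sigma(x\restriction k)$ with the corresponding values for $y$ at the same index $k$, and the suitability hypothesis is tailored precisely to let us force the $\sigma$-agreement on a cofinal set of indices while leaving the terminal value $x(k)$ free to match $y(k)$. Any weaker hypothesis on $\sigma$ would fail to supply this coordination and the comeagreness of $F_y$ would collapse.
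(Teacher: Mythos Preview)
Your proposal is correct and follows essentially the same argument as the paper. The only cosmetic difference is that the paper shows the set $A_y=\set{x}{x\neqisig y}$ is meagre by writing it as a countable union of nowhere dense sets $A_y^n$, whereas you pass to the complement $F_y$ and exhibit it as a countable intersection of dense open sets $D_n^y$; your $D_n^y$ is precisely the complement of the paper's $A_y^n$ (up to the inessential $k\geq n$ versus $k>n$), and your density argument via suitability is the paper's nowhere-dense argument read contrapositively.
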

\begin{proof}
	Given $x\in\omom$, let $A_x=\set{y\in\omom}{ x\neqisig y}$ and note that $A_x=\Cup_{n\in\omega}A_x^n$, where 
	\begin{align*}
		A_x^n=\set{y\in\omom}{ \forall k> n\rb{x(k)\neq y(k)\text{ or }\sigma(x\restriction k)\neq\sigma(y\restriction k)} }.    
	\end{align*}
	We claim that each $A_x^n$ is nowhere dense. Let $s\in\fomom$ and assume $\dom(s)> n$. The suitability of $\sigma$ tells us that there exists $k> n$ and  $t\in{}^{k}\omega$ such that $s\subset t$ and $\sigma(t)=\sigma(x\restriction k)$. We define $t'=t^\frown{\ab{x(k)}}$, then $[t']\cap A^n_x=\emp$, showing that $A^n_x$ is nowhere dense. Consequently $A_x$ is meagre, hence we have the following Tukey connection $\ab{\omom,\mcal M,\in}\preceq\ab{\omom,\omom,\neqisig}$ given by $\rho_-:\omom\to\omom$ being the identity and $\rho_+:\omom\to\mcal M$ sending $x\mapsto A_x$. If $x\neqisig y$ holds, then $y\in A_x$ by definition.
	
	The other Tukey connection is obvious, since $x\neqi y$ implies that $x\neqisig y$.
\end{proof}

For our argument, it will be useful to work with a space $P=\prod_{n\in\omega}{}^{2n+2}\omega$. In order not to have to deal with double brackets, we will write $x_n=x(n)$ for $x\in P$. That is, if $x\in P$, then $x_n:2n+2\to\omega$ is a function for each $n\in\omega$, and $x:n\mapsto x_n$. We also define $P_{<\omega}=\Cup_{k\in\omega}\prod_{n\in k}{}^{2n+2}\omega$ to be the set of initial segments of elements of $P$, and similarly if $s\in P_{<\omega}$ and $n\in\dom(s)$, we write $s_n$ instead of $s(n)$.
Since ${}^{2n+2}\omega$ is countably infinite for each $n$, if we let $P$ have the product topology, then $P$ is homeomorphic to $\omom$.

\begin{thm}
	$\cov(\mcal M)\leq\ssn_c$ and dually $\ssn_c^\bot\leq\non(\mcal M)$.
\end{thm}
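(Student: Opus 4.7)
My plan is to establish the inequality $\cov(\mcal M) \leq \ssn_c$ (and with it the dual $\ssn_c^\bot \leq \non(\mcal M)$) by constructing a Tukey connection $\srED_\sigma \preceq \srSS_c^{cc}$ for some suitable $\sigma$; by \Cref{lmm:cm eds ed} together with the monotonicity of the two norms under Tukey connections, both inequalities fall out simultaneously. Reading the Tukey condition contrapositively, I need maps $\rho_- : P \to \fr S_{cc}$ and $\rho_+ : \icoi \to P$ (after identifying $P$ with $\omom$ via its natural homeomorphism) such that whenever $x \in P$ and $X \in \icoi$ satisfy $x \eqisig \rho_+(X)$, the subseries $\sum_X \rho_-(x)$ diverges.

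The construction proceeds via a block partition $\omega = \bigsqcup_n B_n$, where each $B_n$ is finite but large enough to carry all $\sigma$-marks at level $n$. I subdivide $B_n$ into $2n+2$ slots matching the arity of $x_n \in {}^{2n+2}\omega$, and on each slot place terms of amplitude $\tfrac{1}{n+1}$ arranged so that the positive part and the negative part of the slot each contribute a fixed positive mass, while the signed sum over the whole slot vanishes. The internal positions of these terms in slot $j$ are dictated by $x_n(j)$, so that the series $\mb a_x = \rho_-(x) \in \fr S_{cc}$ satisfies $\sum_{B_n} \mb a_x = 0$ for every $n$ (yielding telescoping convergence across blocks) and has positive and negative parts each diverging to $\pm\infty$ (yielding conditionality). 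Dually, $(\rho_+(X))_n \in {}^{2n+2}\omega$ is read off by recording a canonical feature of $X \cap B_n$ within each slot, e.g.\ the first $X$-position appearing in that slot. The function $\sigma$ is defined on $P_{<\omega}$ to capture the combinatorial type of this slot data up to the given level, and is arranged to be suitable by amalgamation: for any $s \in P_{<\omega}$ and any $x \in P$, one can extend $s$ to some $t$ of length $k \geq \dom(s)$ with $\sigma(t) = \sigma(x \restriction k)$, because the slot alphabet at level $k$ is rich enough to realise every possible $\sigma$-signature.

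The main argument is then direct: if $x \eqisig \rho_+(X)$, there are infinitely many $n$ at which $x$ and $\rho_+(X)$ agree pointwise at level $n$ and share $\sigma$-marks; by the slot structure this forces $X \cap B_n$ to be aligned with the positive anchors prescribed by $x_n$, so the signed subsum $\sum_{X \cap B_n} \mb a_x$ retains magnitude bounded below by a fixed $\delta > 0$. As $n$ ranges over this infinite index set, the partial sums of $\sum_X \mb a_x$ fail to form a Cauchy sequence, and the subseries diverges. The main obstacle will be calibrating three competing parameters into mutual compatibility: the block sizes $|B_n|$ must be large enough that $B_n$ carries every $\sigma$-configuration that might be requested of it (for the suitability of $\sigma$); the amplitude $\tfrac{1}{n+1}$ must decay fast enough that $\sum \mb a_x$ converges, but slowly enough that each block contributes a nontrivial positive and negative mass (for conditionality); and the rule defining $\sigma$ on $P_{<\omega}$ must refine the slot combinatorics finely enough that matching $\sigma$-values genuinely force block-sum lower bounds. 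The coinfiniteness restriction on $X$ also plays a role here, ensuring $X$ cannot trivially swallow each $B_n$ whole and thereby kill the intended asymmetry. Once this balance is struck, both conditionality of $\mb a_x$ and the Tukey condition go through.
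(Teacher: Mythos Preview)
Your high-level strategy --- building a Tukey connection $\srED_\sigma \preceq \srSS_c^{cc}$ for some suitable $\sigma$ and appealing to \Cref{lmm:cm eds ed} --- is exactly what the paper does. The gap lies in the concrete construction, and it is not a matter of calibration but of a missing idea.

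With a \emph{fixed} block partition $\omega=\bigsqcup_n B_n$, recording ``the first $X$-position in each slot'' is far too coarse: matching $x_n=\rho_+(X)_n$ pins down one element of $X$ per slot and says nothing about the rest of $X\cap B_n$, so there is no reason $\sum_{X\cap B_n}\mb a_x$ should be bounded away from $0$. Worse, the obstruction is unavoidable with fixed blocks regardless of how you refine $\rho_+$: an infinite coinfinite $X$ may contain all of $B_n$ for infinitely many $n$ (take $X=\Cup_{n\text{ even}}B_n$), and then $\sum_{X\cap B_n}\mb a_x=\sum_{B_n}\mb a_x=0$ by your own telescoping design. Your claim that coinfiniteness ``ensures $X$ cannot trivially swallow each $B_n$ whole'' is false --- coinfiniteness is a global condition and places no restriction on $X\cap B_n$ for any particular, or even infinitely many, $n$.

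The paper's resolution is to abandon fixed blocks. Its $\rho_+(X)$ records the \emph{switching points} of $X$ (the places where membership in $X$ flips), via the gap lengths between consecutive switches; since $X\in\icoi$, there are infinitely many of these. The nonzero terms of $\rho_-(y)$ are placed at positions $\sigma(y\restriction n)+j_k$ with alternating signs $\frac{(-1)^k}{n}$, where $\sigma(s)=\sum_{n\in\dom(s)}\sum_i(s_n(i)+1)$ is the cumulative position and the offsets $j_k$ are read off from $y_n$. Now $\sigma$-agreement at level $n$ means the $n$-th windows of $x=\rho_+(X)$ and $y$ \emph{start at the same absolute position in $\omega$}, and $x_n=y_n$ means the switching points of $X$ inside that window land exactly on the nonzero terms of $\mb a_y$. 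Hence $X$ picks up precisely every other nonzero term there --- all the $+\frac1n$'s or all the $-\frac1n$'s --- giving $\card{\sum_{X\cap[\chi_n,\chi_{n+1})}\mb a_y}=1$. The two ingredients you are missing are thus the switching-point encoding (which captures the full in/out pattern of $X$, not just one anchor per slot) and the use of $\sigma$ to let the block positions \emph{depend on $y$} rather than being fixed in advance.
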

\begin{proof}
	Define $Q\subset P$ by $x\in Q$ if and only if there are infinitely many $n\in\omega$ such that $x_n$ is not the sequence $\ab{0,\dots,0}$ of length $2n+2$ containing only $0$'s. Note that $P\setminus Q$ is countable, hence $\norm{\ab{Q,\mcal M_P,\in}}=\cov(\mcal M)$ (where $\mcal M_P$ denotes the meagre ideal on $P$) and dually $\norm{\ab{\mcal M_P,Q,\not\ni}}=\non(\mcal M)$. It is furthermore easy to see that, for $\sigma:P_{<\omega}\to\omega$ suitable, $\ab{Q,\mcal M_P,\in}\preceq\ab{Q,Q,\neqisig}$; one follows the proof of \Cref{lmm:cm eds ed}.
	
	For the remainder of the proof, let us fix $\sigma:P_{<\omega}\to\omega$, defined by 
	\begin{align*}
		\sigma(s)=\sum\limits_{n\in\dom(s)}\ \sum\limits_{i\in\dom(s_n)}(s_n(i)+1),
	\end{align*} which is indeed suitable (with respect to $Q$). Here we use our assumption that if $x\in Q$, then there are infinitely many $n$ for which $x_n$ is not the sequence of only $0$'s. We will prove that $\ab{Q,Q,\neqisig}\preceq \ab{\fr S_{cc},\icoi,S_c}$.
	
	Given $y\in Q$, we have $y_n:2n+2\to\omega$. We define $\rho_-(y)=\mb a$ as follows. For any $n\in\omega$ and $k\in 2n+2$, let $j_k=\sum_{i<k}(y_n(i)+1)$, then we let \begin{align*}
		a_{i}&=\begin{cases}
			\frac{(-1)^k}{n}&\text{ if }i=\sigma(y\restriction n)+j_k\text{ for some $n\in\omega$ and }k\in2n+2,\\
			0&\text{ otherwise.}
		\end{cases}
	\end{align*}
	It is easy to see that $\mb a\in\fr S_{cc}$, that is, $\sum\mb a$ is conditionally convergent.
	
	To define $\rho_+:\icoi\to Q$, let $X\in\icoi$ and call $k\in\omega$ a \emph{switching point} of $X$ if $k=0$ or one of the following holds:
	\begin{itemize}
		\item $k\in X$ and $k-1\notin X$,
		\item $k\notin X$ and $k-1\in X$.
	\end{itemize}
	Let $\abset{i_l}{ l\in\omega}$ be the increasing enumeration of all switching points of $X$ and consider the intervals $I_l=[i_l,i_{l+1})$, where we note that $|I_l|>0$ for every $l\in\omega$. Let $N_n=n\cdot(n+1)$, then $N_{n+1}-N_n=2n+2$. We now define $x\in Q$ as follows: for every $n\in\omega$ and every $k\in 2n+2$, we define $x_n(k)=|I_{N_n+k}|-1$. We define $\rho_+(X)=x$. 
	
	Finally we show that this forms a Tukey connection. Let $X\in\icoi$ and $y\in Q$ and let $\mb a=\rho_-(y)$ and $x=\rho_+(X)$. Define $\chi_n=\sigma(x\restriction n)$ for each $n\in\omega$. Suppose that $n$ is such that $x_n=y_n$ and $\chi_n=\sigma(y\restriction n)$, then, using the notation $j_k$, $i_l$ and $N_n$ as in the constructions above, we see that for each $k\in2n+2$ we have $\chi_n+j_k=i_{N_n+k}$. Furthermore the only nonzero elements of $\mb a$ with index in $[\chi_n,\chi_{n+1})$ are $a_{\chi_n+j_k}$ for some $k\in 2n+2$. Since these indices are exactly the switching points of $X$ in the interval $[\chi_n,\chi_{n+1})$, we see that one of the following holds:
	\begin{itemize}
		\item $\chi_n+j_k\in X$ if and only if $k$ is even, or
		\item $\chi_n+j_k\in X$ if and only if $k$ is odd.
	\end{itemize}
	In both cases we see that $\card{\sum_{i\in X\cap [\chi_n,\chi_{n+1})}a_i}=1$. If we assume that $x\eqisig y$, then we can find arbitrarily large $n$ as above, hence it follows that there are infinitely many intervals $I$ such that $\card{\sum_{i\in X\cap I}a_i}=1$. This implies that $\sum_X\mb a$ is divergent.
\end{proof}

\subsection{Conditionally Convergent Subseries}\label{sec:cond conv series}

For the cardinal characteristic $\ssn_{cc}$ we can prove an additional lower bound and an upper bound. We will introduce a couple of well-known cardinal characteristics along the way. 

An infinite set $X\subset\omega$ is \emph{split by} a set $Y$ if $|X\cap Y|=|X\setminus Y|=\aleph_0$.  The \emph{splitting number} $\fr s$ is the least cardinality of a family $S\subset[\omega]^\omega$ such that every set $X\in[\omega]^\omega$ is split by some $Y\in S$. The \emph{reaping number} $\fr r$ is the least cardinality of a family $R\subset[\omega]^\omega$ such that no single $Y\in[\omega]^\omega$ splits all $X\in R$. Since no cofinite $Y\in[\omega]^\omega$ will split any $X\in[\omega]^\omega$, and since every cofinite $X\in[\omega]^\omega$ is split by any $Y\in\icoi$, we may without loss of generality replace $[\omega]^\omega$ by $\icoi$ in the definitions of $\fr s$ and $\fr r$.

\begin{thm}
	$\fr s\leq\ssn_{cc}$ and dually $\ssn_{cc}^\bot\leq\fr r$.
\end{thm}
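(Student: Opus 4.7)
The plan is to exhibit a single Tukey connection $\ab{\icoi,\icoi,\text{split}}\preceq\srSS_{cc}^{cc}$, where the left-hand system has an element $A$ related to $Y$ precisely when $Y$ splits $A$. The discussion just before the theorem tells us that this system has norm $\fr s$ and dual norm $\fr r$, so this one connection will simultaneously give both $\fr s\leq\ssn_{cc}$ and $\ssn_{cc}^\bot\leq\fr r$.

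For the forward map $\rho_-:\icoi\to\fr S_{cc}$, I would assign to each $A$ a conditionally convergent series whose positive terms all sit on $A$ and whose negative terms all sit on $\omega\setminus A$. To arrange this, first partition $\omega$ into consecutive finite blocks $B_0,B_1,\ldots$, each meeting both $A$ and $\omega\setminus A$ (possible since both sets are infinite). Inside $B_n$ I would spread a positive mass totalling $\frac{1}{n+1}$ evenly across $B_n\cap A$, and a compensating negative mass of total $-\frac{1}{n+1}$ evenly across $B_n\setminus A$. Block-wise cancellation makes partial sums vanish at block boundaries and stay within $\frac{1}{n+1}$ inside a block, while the harmonic weights $\frac{1}{n+1}$ guarantee both vanishing at infinity and conditional (rather than absolute) convergence.

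For $\rho_+$ I plan to simply use the identity on $\icoi$. To verify the Tukey condition, I would argue the contrapositive: if $X\in\icoi$ does not split $A$, then by symmetry we may assume $X\cap A$ is finite, so $\mb a\restriction X$ has eventually constant sign. A series whose terms are eventually of a single sign is either absolutely convergent or divergent, and in either case cannot be conditionally convergent.

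The most delicate part will be the construction of $\mb a=\rho_-(A)$, since it must simultaneously secure membership in $\fr S_{cc}$, vanishing at infinity, and the sign-support condition $P_{\mb a}\subset A$, $N_{\mb a}\subset\omega\setminus A$. The block-and-balance recipe is designed to handle all three in one step; once that is in place, the Tukey verification is essentially automatic.
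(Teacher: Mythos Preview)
Your proposal is correct and follows essentially the same strategy as the paper: both build a Tukey connection with $\rho_+$ the identity and $\rho_-(A)$ a conditionally convergent series whose positive terms lie inside $A$ and whose negative terms lie inside $\omega\setminus A$, then conclude by the same contrapositive argument. The only cosmetic difference is that the paper takes the blocks to be the maximal consecutive runs of $A$ and $\omega\setminus A$ (so each block is contained in one side) with alternating-harmonic weights $\frac{(-1)^n}{|I_n|\cdot n}$, whereas you take blocks meeting both sides with balanced $\pm\frac{1}{n+1}$ mass; either construction yields the required sign-support property.
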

\begin{proof}
	Let $\sr S=\ab{\icoi,\icoi,\ap{\text{is split by}}}$, then we will define a Tukey connection $\sr S\preceq \sr S_{cc}^{cc}$. We let $\rho_+:\icoi\to\icoi$ be the identity, and define $\rho_-:\icoi\to\fr S_{cc}$ as follows. Given a set $X\in\icoi$, we define an interval partition $\abset{I_n}{ n\in\omega}$ of $\omega$ such that 
	\begin{itemize}
		\item $\min(I_0)=0$,
		\item $\min(I_n)<\max(I_n)+1=\min (I_{n+1})$,
		\item $X=\Cup_{n\in\omega}I_{2n}$ or $X=\Cup_{n\in\omega}I_{2n+1}$ (depending on whether $0\in X$). 
	\end{itemize}
	For a given $i\in\omega$ we let $n\in\omega$ be such that $i\in I_n$ and define $a_i=\frac{(-1)^n}{|I_n|\cdot n}$. It is not hard to see that $\mb a\in \fr S_{cc}$. We let $\rho_-(X)=\mb a$.
	
	Let $X,Y\in\icoi$ and $\rho_-(X)=\mb a$. If $\sum_Y\mb a$ is conditionally convergent, then we claim that $X$ is split by $Y$. Suppose not, then either $X\cap Y$ or $X\setminus Y$ is finite. If $X\cap Y$ is finite and $0\in X$, then $a_i<0$ for only finitely many $i\in Y$. That means that $\sum_{Y\cap N_{\mb a}}\mb a$ is finite, hence $\sum_{Y}\mb a$ either tends to $\infty$ or is unconditionally convergent, contrary to our assumption that $\sum_Y\mb a$ is conditionally convergent. The cases where $X\setminus Y$ is finite or where $0\notin X$ are similar.
\end{proof}

We define two orderings $\leqs$ and $\leqi$ on $\omom$ by saying that $f\leqs g$ if and only if $f(n)>g(n)$ for at most finitely many $n\in\omega$ and $f\leqi g$ if and only if $f(n)\leq g(n)$ for infinitely many $n\in\omega$. If $f\leqs g$, we say that $g$ \emph{dominates} $f$. The \emph{dominating number} $\fr d$ is the least cardinality of a subset $D\subset\omom$ such that every $f\in\omom$ is dominated by some $g\in D$. Dually, the \emph{bounding number} $\fr b$ is the least cardinality of a subset $B\subset\omom$ such that no $g\in\omom$ dominates all $f\in B$. Note that such $B$ has the property that for every $g\in\omom$ there exists $f\in B$ such that $g\leqi f$. 

It is clear from this definition that $\sr D=\ab{\omom,\omom,\leqs}$ and $\sr B=\ab{\omom,\omom,\leqi}$ are relational systems with norms $\norm{\sr D}=\fr d$ and $\norm{\sr B}=\fr b$. It is also easy to see that $\sr D^\bot\eqv\sr B$.

Although the above systems are directly connected to $\fr b$ and $\fr d$ through their definitions, it will be useful to have an alternative (but Tukey-equivalent) relational system in terms of interval partitions. Let a sequence $\mcal I=\abset{I_n}{ n\in\omega}$ be called an \emph{interval partition} if each $I_n=[i_n,i_{n+1})$ is an interval of $\omega$ with $i_0=0$ and $i_n<i_{n+1}$ for all $n\in\omega$. We let $\mrm{IP}$ be the family of interval partitions. For $\mcal I,\mcal J\in\mrm{IP}$, we say that $\mcal J$ \emph{dominates} $\mcal I$, written $\mcal I\sqsubseteq^*\mcal J$, if for almost all $n\in\omega$ there is $k\in\omega$ such that $I_{k}\subset J_n$. Like the name of the relation suggests, $\sr D'=\ab{\mrm{IP},\mrm{IP},\sqsubseteq^*}$ is Tukey equivalent to $\sr D$ and a proof for this can be found in \cite[\S~2]{Blass}. We will also, in \Cref{sec:dual bonus}, consider a dual system $\sr B'=\ab{\mrm{IP},\mrm{IP},\sqsubseteq^\infty}$ where $\mcal I\sqsubseteq^\infty\mcal J$ if and only if there are infinitely many $n\in\omega$ for which there is $k\in\omega$ such that $I_k\subset J_n$. To show that $\sr B\eqv\sr B'$ and for the sake of completeness, we will repeat the proof from \cite[\S~2]{Blass} below with a slight adaptation.

\begin{lmm}
	$\sr D\eqv\sr D'$ and $\sr B\eqv\sr B'$.
\end{lmm}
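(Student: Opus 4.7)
The plan is to construct explicit Tukey connections in both directions, establishing $\sr D \equiv \sr D'$ directly and $\sr B \equiv \sr B'$ by the same maps with a small adaptation for the final direction.

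For $\sr D \preceq \sr D'$ (and simultaneously $\sr B \preceq \sr B'$): assume WLOG that $f \in \omom$ is nondecreasing (by replacing $f$ with its cumulative maximum, which does not change its $\leqs$- or $\leqi$-class). Define $\rho_-(f) = \mcal I_f$ by $i^f_0 = 0$ and $i^f_{n+1} = i^f_n + f(i^f_n) + 1$, so that $|I^f_n| = f(i^f_n)+1$, and $\rho_+(\mcal J)(n) = j_{k+2}$ where $k$ is the unique index with $n \in J_k$. If $\mcal I_f \sqsubseteq^* \mcal J$, then for each large $k$ we fix some $I^f_l \subset J_{k+1}$; any $n \in J_k$ satisfies $n < j_{k+1} \leq i^f_l$, hence $f(n) \leq f(i^f_l) = i^f_{l+1} - i^f_l - 1 \leq j_{k+2} = \rho_+(\mcal J)(n)$ by monotonicity of $f$. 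For the $\sr B$-version the same maps work: each of the infinitely many $k$ with some $I^f_{l(k)} \subset J_k$ yields a witness $n_k = i^f_{l(k)} \in J_k$ with $f(n_k) < j_{k+1} \leq j_{k+2} = \rho_+(\mcal J)(n_k)$, and $n_k \to \infty$ gives infinitely many witnesses.

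For $\sr D' \preceq \sr D$ (and $\sr B' \preceq \sr B$): assume WLOG that $g$ is nondecreasing. Define $\rho_-(\mcal I)(n) = i_{n+1}$ and $\rho_+(g) = \mcal J_g$ recursively by $j_0 = 0$ and $j_{m+1} = g(j_m) + 1$. In the $\sr D'$-case, $f_{\mcal I} \leqs g$ implies that for large $m$, the least $l$ with $i_l \geq j_m$ satisfies $l \leq j_m$ (because $l-1 \leq i_{l-1} < j_m$), whence $i_{l+1} \leq i_{j_m+1} \leq g(j_m) < j_{m+1}$ and $I_l \subset J_m$. In the $\sr B'$-case, the slight adaptation is required because $f_{\mcal I} \leqi g$ provides witnesses at generic $n$ rather than at $n = j_m$; the remedy is to enlarge $\mcal J_g$ (for instance, taking $j_{m+1}$ to absorb all relevant $g$-values at points of $J_m$, so that for any good $n \in J_m$ the interval-fitting argument still places some $I_l$ inside $J_m$). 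Since the good set $\{n : i_{n+1} \leq g(n)\}$ is infinite and each cell $J_m$ is finite, these witnesses spread across infinitely many $J_m$'s, providing $\mcal I \sqsubseteq^\infty \mcal J_g$.

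The main obstacle is the quantifier transfer in the $\sr B'$-direction: the $\sr D'$ argument uses the hypothesis at the specific sample points $n = j_m$, which is not available when only $f_{\mcal I} \leqi g$ is known. The adaptation must tune the recursive construction of $\mcal J_g$ so that every good $n$—wherever it lies within its cell—contributes an absorbing $m$, which combined with the finiteness of each cell ensures that the infinitely many good $n$'s are distributed across infinitely many cells.
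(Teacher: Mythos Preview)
Your overall architecture matches the paper's: build one pair of maps $(\phi,\psi)$ between $\omom$ and $\mrm{IP}$ and use them in both directions for both $\sr D$ and $\sr B$. Your concrete maps differ from the paper's, however. The paper takes $\psi(\mcal I)(x)=i_{n+3}$ where $x\in I_n$ (a function of the \emph{point} $x$, with three intervals of slack), whereas you take $\rho_-(\mcal I)(n)=i_{n+1}$ (a function of the \emph{index} $n$, with no slack). For $\sr D\preceq\sr D'$, $\sr B\preceq\sr B'$ and $\sr D'\preceq\sr D$ your maps and verifications are correct.

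The gap is in $\sr B'\preceq\sr B$. Your proposed remedy, ``taking $j_{m+1}$ to absorb all relevant $g$-values at points of $J_m$'', is circular as written: $J_m=[j_m,j_{m+1})$ already depends on $j_{m+1}$, so you cannot define $j_{m+1}$ in terms of the values of $g$ on $J_m$. Even a two-step version (absorb up to a preliminary endpoint, then merge) does not immediately give what you claim, because a good $n\in J_m$ only yields $i_n\geq n\geq j_m$ and $i_{n+1}\leq g(n)$, and this leaves room for $I_n$ to straddle the boundary $j_{m+1}$; nothing you have said forces \emph{some} $I_l$ into $J_m$. The paper sidesteps this by putting the slack into $\rho_-$ rather than $\rho_+$: with $\psi(\mcal I)(x)=i_{n+3}$, a single good point $x\in I_n$ gives control over three consecutive $i$-endpoints, and a short case analysis then places $I_n$, $I_{n+1}$ or $I_{n+2}$ inside $J_k$ or $J_{k+1}$.

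It is worth noting that your unmodified maps actually \emph{do} work for $\sr B'\preceq\sr B$, via a different argument: if from some point on no $J_m$ contains any $I_k$, then the map $k\mapsto m_k$ (where $i_k\in J_{m_k}$) is eventually strictly increasing, forcing $i_{n+1}\geq j_{n+1-c}$ for a fixed shift $c$; since $j_l$ grows by iterating $g$, this eventually exceeds $g(n)$ and kills all good $n$. But this is not the argument you gave, and the ``enlargement'' you actually propose does not stand on its own.
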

\begin{proof}
	Given $g\in\omom$, we let $j_0=0$ and inductively define $j_{n+1}$ to be minimal such that $j_{n}<j_{n+1}$ and $g(x)<j_{n+1}$ for all $x\leq j_{n}$. We let $J_n=[j_n,j_{n+1})$ and define $\phi(g)=\mcal J=\abset{J_n}{ n\in\omega}\in\mrm{IP}$.
	
	On the other hand, given  $\mcal I=\abset{I_n}{ n\in\omega}\in\mrm{IP}$ with $I_n=[i_n,i_{n+1})$ for each $n\in\omega$, we define $f\in\omom$ by $f:x\mapsto i_{n+3}$, where $I_n$ is the interval in which $x$ is contained.\footnote{The proof in \cite[\S~2]{Blass} (in our notation) defines $f$ to map $x\mapsto i_{n+2}$.} We define $\psi(\mcal I)=f$.
	
	If we let $\rho_-=\phi$ and $\rho_+=\psi$, then this forms a Tukey connection for both $\sr D\preceq\sr D'$ and $\sr B\preceq\sr B'$. On the other hand, the choice of $\rho_-=\psi$ and $\rho_+=\phi$ forms a Tukey connection for $\sr D'\preceq\sr D$ and $\sr B'\preceq\sr B$. We only show that  $\sr B'\preceq\sr B$ and leave the rest of the details to the reader.
	
	Let $g\in\omom$ and $\mcal I\in\mrm{IP}$ and let $\mcal J=\phi(g)$ and $f=\psi(\mcal I)$. Suppose that $f\leqi g$ and $n_0\in\omega$. Find $n\geq n_0$ such that there is $x\in I_n$ with $f(x)\leq g(x)$, then we show there are $k',n'\in\omega$ such that $n'\geq n$ and $I_{n'}\subset J_{k'}$. Let $k\in\omega$ be such that $x\in J_k$. If $I_n\subset J_k$, then we are done, so let us assume that either $i_n<j_k$ or $j_{k+1}<i_{n+1}$.
	
	If $i_n<j_k$, we show that either $I_{n+1}\subset J_k$ or $I_{n+2}\subset J_{k+1}$. Assume that $I_{n+1}\nsubset J_k$, then
	\begin{align*}
		i_n<j_k\leq x<j_{k+1}\leq i_{n+2}<i_{n+3}=f(x)\leq g(x),   
	\end{align*} 
	and because $x<j_{k+1}$ implies $g(x)<j_{k+2}$, we see that $I_{n+2}\subset J_{k+1}$.
	
	Otherwise, if $j_{k+1}<i_{n+1}$, we have
	\begin{align*}
		x<j_{k+1}< i_{n+1}<i_{n+3}=f(x)\leq g(x)<j_{k+2}, 
	\end{align*}
	and thus $I_{n+1}\subset J_{k+1}$.
\end{proof}

The following result was presented to me by Jörg Brendle (private communication).

\begin{thm}
	$\ssn_{cc}\leq \fr d$ and $\fr b\leq\ssn_{cc}^\bot$.
\end{thm}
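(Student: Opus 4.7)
My plan is to exhibit a Tukey connection $\srSS_{cc}^{cc}\preceq\sr D'$; combined with the equivalence $\sr D'\eqv\sr D$ just proven, this gives both $\ssn_{cc}\leq\fr d$ and $\fr b\leq\ssn_{cc}^\bot$ in one stroke.

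For $\rho_-:\fr S_{cc}\to\mrm{IP}$, given $\mb a\in\fr S_{cc}$ with limit $L=\sum\mb a$, I would set $i_0=0$ and recursively pick $i_{n+1}$ to be the least integer above $i_n$ such that (a) $\card{\sum_k\mb a-L}<2^{-(n+1)}$ holds for every $k\geq i_{n+1}$, and (b) $\sum_{[i_n,i_{n+1})\cap P_{\mb a}}\mb a\geq 1$ while $\sum_{[i_n,i_{n+1})\cap N_{\mb a}}\mb a\leq-1$. Condition (a) can be met because $\sum\mb a$ is convergent, and (b) because $\mb a$ is conditional; setting $\rho_-(\mb a)=\abset{[i_n,i_{n+1})}{n\in\omega}$ then yields an interval partition. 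For $\rho_+:\mrm{IP}\to\icoi$, I would take $\rho_+(\mcal J)=X_\mcal J=\Cup_{n\in\omega}J_{2n}$, which is infinite coinfinite.

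To verify the Tukey condition, assume $\rho_-(\mb a)=\mcal I\sqsubseteq^*\mcal J$, and for sufficiently large $n$ let $k(n)$ be the least $k$ with $I_k\subset J_n$. Since the $J_n$'s are disjoint, $k$ is strictly increasing, so $k(n)\geq n$ up to a constant shift. Minimality of $k(n)$ forces $j_n>i_{k(n)-1}$ (otherwise $I_{k(n)-1}\subset J_n$ too), so (a) yields $\card{\sum_{j_n}\mb a-L}<2^{-(k(n)-1)}$, and likewise for $j_{n+1}$, giving $\card{\sum_{J_n}\mb a}=O(2^{-k(n)})$, which is summable in $n$. Thus $\sum_l\sum_{J_{2l}}\mb a$ converges absolutely, and the same bound controls partial sums within each $J_{2n}$, so $\sum_{X_\mcal J}\mb a$ converges. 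For conditionality, (b) together with $I_{k(2n)}\subset J_{2n}$ shows that each $J_{2n}$ contributes at least $1$ to the positive part and at most $-1$ to the negative part; both parts therefore diverge.

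The main obstacle is arranging (a) and (b) simultaneously within the same interval partition; this is where convergence and conditionality of $\mb a$ must cooperate. A subtler combinatorial point is deducing $k(n)\geq n$ (up to a constant) from the disjointness of $\mcal J$, which is precisely what renders the tail bounds summable and upgrades them to genuine convergence of the partial sums restricted to $X_\mcal J$, not just their boundedness.
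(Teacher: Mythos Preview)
Your proof is correct and follows essentially the same approach as the paper's: both construct a Tukey connection $\srSS_{cc}^{cc}\preceq\sr D'$ with $\rho_+(\mcal J)=\Cup_n J_{2n}$ and $\rho_-(\mb a)$ an interval partition whose blocks simultaneously control tail oscillations (for convergence) and carry enough mass in both signs (for conditionality). The differences are cosmetic---you use $2^{-n}$ where the paper uses $1/n^2$, and you split positive and negative parts where the paper uses $\sum_{i\in I_n}|a_i|\geq 1$---and your verification of the bound on $\min(J_n)$ via minimality of $k(n)$ is in fact slightly more careful than the paper's.
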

\begin{proof}
	We give a Tukey connection that witnesses $\srSS_{cc}^{cc}\preceq \sr D'$, consisting of $\rho_-:\fr S_{cc}\to\mrm{IP}$ and $\rho_+:\mrm{IP}\to\icoi$. 
	
	Given $\mcal J=\abset{J_n}{ n\in\omega}\in\mrm{IP}$, let $\rho_+(\mcal J)=\Cup_{n\in\omega}J_{2n}$. Given $\mb a\in\fr S_{cc}$, there is a function $f_{\mb a}:\bb R_{>0}\to\omega$ such that for every $\epsilon>0$ and interval $I=[n,n')$ with $n\geq f_{\mb a}(\epsilon)$ we have $\sum_I\mb a\in(-\epsilon,\epsilon)$. Define $\rho_-(\mb a)=\abset{I_n}{ n\in\omega}$ such that $\sum_{i\in I_n}|a_i|\geq 1$ and $\min(I_n)\geq f_{\mb a}\rb{\frac 1{n^2}}$ hold for each $n\in\omega$.
	
	If $\mb a\in\fr S_{cc}$ and $\mcal J\in\mrm{IP}$ with $\rho_-(\mb a)=\mcal I=\abset{I_n}{ n\in\omega}$ and $\rho_+(\mcal J)=X$ are such that $\mcal I\sqsubseteq^*\mcal J$, then for almost all $n\in\omega$ there is $k_n\in\omega$ such that $I_{k_n}\subset J_n$, and we can assume without loss that $k_n> n$. Particularly, $\min(J_n)\geq \min(I_n)\geq f_{\mb a}\rb{\frac 1{n^2}}$ holds for almost all $n\in\omega$, and thus $-\frac 1{n^2}<\sum_{J_{n}}\mb a<\frac 1{n^2}$ for almost all $n\in\omega$. This implies that $\sum_X\mb a$ is convergent. Moreover, since for almost all $n\in\omega$ there is $k$ with $I_{k}\subset J_{2n}$ and $\sum_{i\in I_{k}}|a_i|\geq 1$, we conclude that $\sum_{X}\mb a$ is not absolutely convergent.   
\end{proof}

\subsection{Unconditionally Convergent Subseries}

The final of our four subseries numbers related to convergence is the cardinal $\ssn_{ac}$. It turns out that this cardinal is equal to the dominating number. 

\begin{thm}\label{thm:ac and d}
	$\ssn_{ac}=\fr d$ and dually $\ssn_{ac}^\bot=\fr b$.
\end{thm}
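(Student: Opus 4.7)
My plan is to establish the Tukey equivalence $\srSS_{ac}^{cc}\equiv\sr D'$ in both directions, which immediately yields $\ssn_{ac}=\fr d$ and dually $\ssn_{ac}^\bot=\fr b$.

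For $\srSS_{ac}^{cc}\preceq\sr D'$ I adapt the Tukey connection used in the preceding theorem, replacing the $\rho_+$ map by one that selects a single index per block so as to force \emph{absolute} convergence rather than just convergence. Given $\mb a\in\fr S_{cc}$, set $g_{\mb a}(n)=\min\set{k}{\card{a_i}\leq 2^{-n}\text{ for all }i\geq k}$, which exists and is non-decreasing in $n$ since $\mb a$ vanishes at infinity; let $\rho_-(\mb a)\in\mrm{IP}$ be an interval partition with $\min I_n\geq g_{\mb a}(n)$. For $\mcal J\in\mrm{IP}$ set $\rho_+(\mcal J)=\set{\min J_{2n}}{n\in\omega}\in\icoi$, which is infinite (one index per even-indexed block) and coinfinite (each odd-indexed block is omitted). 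The same interlocking argument as in the previous proof shows that $\rho_-(\mb a)\sqsubseteq^*\mcal J$ forces $\min J_n\geq\min I_n\geq g_{\mb a}(n)$ for almost all $n$, so $\card{a_{\min J_{2n}}}\leq 2^{-2n}$ and hence $\sum_{\rho_+(\mcal J)}\card{\mb a}<\infty$.

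For the converse $\sr D\preceq\srSS_{ac}^{cc}$, given $f\in\omom$ (WLOG strictly increasing with $f(0)=0$) define $\rho_-(f)=\mb a_f$ by $a_j=(-1)^j/(n+1)$ for $j\in[2f(n),2f(n+1))$. Doubling $f$ makes every block have even length, so the alternating terms in a block cancel and $\sum\mb a_f=0$; $\mb a_f$ is conditional in the paper's strict sense because restricting to even-indexed positions alone contributes $\sum_n(f(n+1)-f(n))/(n+1)\geq\sum_n 1/(n+1)=\infty$ to the positive part (and symmetrically for the negative). For $X=\ac{x_0<x_1<\cdots}\in\icoi$ put $\rho_+(X)=g_X$ with $g_X(n)=\lfloor x_n/2\rfloor$. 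The Tukey condition $\sum_X\card{\mb a_f}<\infty\Rightarrow f\leqs g_X$ is verified by Abel summation: writing $s_k=\card{X\cap[2f(k),2f(k+1))}$ and $S_N=\sum_{k<N}s_k=\card{X\cap[0,2f(N))}$, summation by parts gives
\[
\sum_{k=0}^{N-1}\frac{s_k}{k+1}=\frac{S_N}{N}+\sum_{k=1}^{N-1}\frac{S_k}{k(k+1)}.
\]
Both terms on the right are non-negative, and convergence of the left combined with monotonicity of $S_k$ and the local estimate $\sum_{k=n}^{2n}1/(k(k+1))\geq 1/(3n)$ forces $S_N/N\to 0$. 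Hence $S_N<N$ eventually, i.e., $x_{N-1}\geq 2f(N)$ eventually, whence $f(N)\leq\lfloor x_{N-1}/2\rfloor=g_X(N-1)\leq g_X(N)$ for almost all $N$.

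The main obstacle is this last Abel-summation step: a $\liminf$-type conclusion would only give $f\leqi g_X$, whereas Tukey demands the stronger $f\leqs g_X$, so we must extract the pointwise-eventual inequality $S_N<N$. Monotonicity of $S$ is crucial here, since a positive $\limsup$ of $S_N/N$ would, via the local estimate above, force divergence of $\sum_{k\geq 1}S_k/(k(k+1))$, contradicting the left-hand convergence.
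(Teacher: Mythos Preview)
Your proof is correct, and the underlying constructions are essentially those of the paper. For $\srSS_{ac}^{cc}\preceq\sr D$ the paper works directly with $\sr D$ rather than $\sr D'$: it sends $\mb a$ to a function $g$ with coinfinite range satisfying $|a_i|\leq 1/n^2$ for all $i\geq g(n)$, and sends $f\in\omom$ to $\ran(f)$. This is the same ``one representative per level'' idea as your $\min J_{2n}$ map, just phrased without interval partitions.

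For $\sr D\preceq\srSS_{ac}^{cc}$ the paper uses the same sequence (without the doubling: $a_i=(-1)^i/n$ on $[f(n),f(n+1))$) and essentially the same $\rho_+$ (the increasing enumeration $g$ of $X$), but verifies the Tukey condition by a short direct contrapositive rather than Abel summation. Namely, if $f\nleqs g$ one picks indices $k_n$ with $g(k_n)\leq f(k_n)$ and $k_{n+1}>2k_n$; then $|a_{g(i)}|>1/k_{n+1}$ for every $i\in[k_n,k_{n+1})$, giving
\[
\sum_{i\in[k_n,k_{n+1})}|a_{g(i)}|\geq (k_{n+1}-k_n)\cdot\frac{1}{k_{n+1}}\geq\frac12,
\]
so $\sum_X|\mb a|=\infty$. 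This bypasses the whole $\limsup S_N/N$ analysis: once you have infinitely many ``bad'' indices you space them geometrically and estimate each block directly. Your Abel-summation argument is valid but noticeably longer. Conversely, your doubling trick buys you something the paper leaves implicit: it makes the verification that $\mb a_f\in\fr S_{cc}$ trivial (each block sums to zero), whereas for the paper's undoubled blocks one needs a short alternating-series observation to see convergence.
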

\begin{proof}
	First, we give a Tukey connection witnessing $\sr S_{ac}^{cc}\preceq\sr D$, consisting of $\rho_-:\fr S_{cc}\to\omom$ and $\rho_+:\omom\to\icoi$.
	Given $\mb a\in\fr S_{cc}$, we let $\rho_-(\mb a)=g$ be any function such that $\ran(g)$ is coinfinite and $|a_{i}|\leq\frac1{n^2}$ for all $i\geq g(n)$. For $f\in\omom$, we let $\rho_+(f)=\ran(f)$ if $\ran(f)$ is coinfinite, and arbitrary otherwise. Let $\mb a\in\fr S_{cc}$ and $f\in\omom$, and let $g=\rho_-(\mb a)$ and $X=\rho_+(f)$. If $g\leqs f$ holds, then since $\ran(g)$ is coinfinite, so is $\ran(f)$. Now note that $|a_{f(n)}|\leq \frac 1{n^2}$ for almost all $n\in\omega$, and thus $\sum_X\mb a$ is unconditionally convergent.
	
	For the other direction we give a Tukey connection witnessing $\sr D\preceq\sr S_{ac}^{cc}$, consisting of $\rho_-:\omom\to\fr S_{cc}$ and $\rho_+:\icoi\to\omom$.
	Given $X\in\icoi$, let $\rho_+(X)=g:\omega\to X$ be the unique increasing bijection. Given $f\in\omom$, we will assume without loss of generality that $f$ is strictly increasing and $f(0)=0$. We define $\rho_-(f)=\mb a$ as follows. For every $i\in\omega$ there exists $n\in\omega$ such that $i\in[f(n),f(n+1))$, and we define $a_i=\frac{(-1)^i}{n}$. 
	
	Let $X\in\icoi$ and $f\in\omom$ and let $\rho_+(X)=g$ and $\rho_-(f)=\mb a$. To show this satisfies the conditions of a Tukey connection, we argue by contrapositive. Suppose that $f\nleqs g$, then $g\leqi f$ and thus we may find a sequence of integers $\abset{k_n}{ n\in\omega}$ such that $g(k_n)\leq f(k_n)$ and such that $k_{n+1}>2k_n$ for all $n\in\omega$. Now note that $|a_{g(i)}|> \frac1{k_{n+1}}$ for all $i\in[k_n,k_{n+1})$, hence 
	\begin{align*}
		\sum_{i\in[k_n,k_{n+1})}|a_{g(i)}|\geq (k_{n+1}-k_n)\cdot \frac1{k_{n+1}}=1-\frac{k_n}{k_{n+1}}\geq 1-\frac{k_n}{2k_n}=\frac12
	\end{align*}
	and therefore $\sum_{i\in\omega}|a_{g(i)}|=\infty$. Hence if $\mb a\restriction X$ is convergent, then it is conditionally so.
\end{proof}

\section{Subseries with Specific Limits}\label{sec:specific limits}

Let us define some additional subsets of $\fr S$. Given $A\subset\bb R_\infty$, we define
\begin{align*}
	\fr S_l^A&=\set{\mb a\in\fr S}{ \text{there is }r\in A\text{ such that }\ts\sum\mb a\text{ tends to }r}.
\end{align*}
Let us additionally define the following relations $S_l^A$ and $S_e$ on $\fr S\times\icoi$:
\begin{itemize}
	\item $(\mb a,X)\in S_l^A$ if and only if $\sum_X\mb a$ tends to a limit $r\in A$.
	\item $(\mb a,X)\in S_e$ if and only if $\sum\mb a$ and $\sum_X\mb a$ both tend to the same limit.
\end{itemize}

We may now define the subseries number $\ssn_l^A$ as the least size of a family $\mcal X\subset\icoi$ such that each conditionally convergent series $\sum\mb a$ has a subseries $\sum_X\mb a$, with $X\in\mcal X$, that converges to a limit in $A$. We define $\ssn_e$ in a similar way, except that $\sum_X\mb a$ must converges to the same limit as $\sum \mb a$ does. In terms of relational systems, $\ssn_l^A$ and $\ssn_e$ are the norms of the following systems.
\begin{align*}
	\srSS^{cc,A}_{l}&=\ab{\fr S_{cc},\icoi,S_l^A},\\
	\srSS^{cc}_{e}&=\ab{\fr S_{cc},\icoi,S_e}.
\end{align*}

In this section we will first show that the cardinals $\ssn_l^A$ and $\ssn_e$ are related, and that for $|A|<\fr c$ both cardinals are equal to $\fr c=2^{\aleph_0}$ and the dual cardinals are finite. By introducing an arbitrarily small margin for error to the definition of $\ssn_e$ and by considering $A$ that contain an open interval, we can create more interesting cardinals that turn out to be equal to $\ssn_c$, although the dual cardinals remain small and are equal to $\aleph_0$.

\subsection{Convergence to Specific Limits}

Let us first consider the case of singleton $A=\ac r$ for some $r\in\bb R$. We will need the following observation to relate $\ssn_e$ to $\ssn_l^{\ac r}$.

\begin{lmm}\label{lmm:sum decomposition}
	Let $\mb a\in\fr S_{cc}$ and $X\in\icoi$ be such that $\sum_X\mb a$ is convergent, and let and $r,r_X\in\bb R$ be the limits $\sum\mb a=r$ and $\sum_X\mb a=r_X$. Then $\sum_{\omega\setminus X}\mb a=r-r_X$.
\end{lmm}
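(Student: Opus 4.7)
The plan is to prove this by a straightforward decomposition of partial sums. For every $k\in\omega$, every index $i<k$ lies in either $X$ or $\omega\setminus X$, giving the identity
\begin{align*}
    \ts\sum_k\mb a=\ts\sum_{X\cap k}\mb a+\ts\sum_{(\omega\setminus X)\cap k}\mb a
\end{align*}
for every $k$. My strategy is to take limits as $k\to\infty$ in this identity.

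First I would show that the sequence $\ab{\sum_{X\cap k}\mb a: k\in\omega}$ converges to $r_X$. If $\ab{i_n:n\in\omega}$ enumerates $X$ increasingly and $m_k=|X\cap k|$, then $\sum_{X\cap k}\mb a$ equals the $m_k$-th partial sum of $\mb a\restriction X$; since $X$ is infinite, $m_k\to\infty$. The hypothesis $\sum_X\mb a=r_X$ means that the sequence of partial sums of $\mb a\restriction X$ tends to $r_X$, so the subsequence indexed by $m_k$ does as well. (Between indices $k$ with $k\notin X$, the partial sum is constant, so this only repeats values; no new accumulation points are introduced.) Since $\sum\mb a=r$, we also have $\sum_k\mb a\to r$. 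Subtracting, the partial sums $\sum_{(\omega\setminus X)\cap k}\mb a$ converge to $r-r_X$.

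To translate this back into a statement about $\sum_{\omega\setminus X}\mb a$, I apply the same correspondence in reverse: since $\omega\setminus X$ is infinite (because $X\in\icoi$), partial sums of $\mb a\restriction(\omega\setminus X)$ form a subsequence of $\ab{\sum_{(\omega\setminus X)\cap k}\mb a:k\in\omega}$, which has unique accumulation point $r-r_X$. Hence $\sum\mb a\restriction(\omega\setminus X)$ tends to $r-r_X$, which is what is meant by $\sum_{\omega\setminus X}\mb a=r-r_X$.

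There is no real obstacle here; the only care needed is bookkeeping the two parametrisations of partial sums (indexed by $k\in\omega$ versus indexed by the increasing enumeration of $X$), and observing that they have the same accumulation points because one is obtained from the other by repeating values along intervals where $k$ skips $X$ or $\omega\setminus X$. This is immediate from $X,\omega\setminus X\in[\omega]^\omega$ together with $\mb a\in\fr S$ vanishing at infinity.
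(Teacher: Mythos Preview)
Your proof is correct and follows exactly the same approach as the paper, which simply remarks that the claim is easy by the decomposition $\sum_k\mb a=\sum_{X\cap k}\mb a+\sum_{(\omega\setminus X)\cap k}\mb a$ of the partial sums; you have merely supplied the routine bookkeeping that the paper leaves to the reader. The only superfluous remark is the appeal to $\mb a$ vanishing at infinity, which is not needed: the sequence $\ab{\sum_{X\cap k}\mb a:k\in\omega}$ is literally the sequence of partial sums of $\mb a\restriction X$ with some values repeated, so equality of accumulation points is immediate from $X$ being infinite.
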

\begin{proof}
	This is easy by considering the decomposition $\sum_k\mb a=\sum_{k\cap X}\mb a+\sum_{k\cap (\omega\setminus X)}\mb a$ of the partial sums for all $k\in\omega$.
\end{proof}

\begin{crl}\label{lmm:ss_e and 0}
	$\ssn_e=\ssn_l^{\ac 0}$ and $\ssn_e^\bot={\ssn_l^{\ac 0\bot}}$.
\end{crl}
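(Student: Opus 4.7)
The plan is to show the Tukey equivalence $\srSS^{cc}_e \equiv \srSS^{cc,\ac 0}_l$, which will yield both the equality of norms and the equality of dual norms simultaneously. The key observation is that \Cref{lmm:sum decomposition} states precisely that ``$\sum_X\mb a$ tends to $0$'' is equivalent to ``$\sum_{\omega\setminus X}\mb a$ tends to the same limit as $\sum\mb a$''. Since the complement operation $X\mapsto\omega\setminus X$ is an involution on $\icoi$ (the complement of an infinite coinfinite set is again infinite coinfinite), it provides candidate Tukey morphisms in both directions.

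For each direction I would take $\rho_-$ to be the identity on $\fr S_{cc}$ and $\rho_+\colon\icoi\to\icoi$ to be $X\mapsto\omega\setminus X$. For $\srSS^{cc}_e\preceq\srSS^{cc,\ac 0}_l$, I would verify the Tukey property by assuming $(\mb a,X')\in S^{\ac 0}_l$, that is, $\sum_{X'}\mb a=0$, and invoking \Cref{lmm:sum decomposition} with $r_{X'}=0$ to obtain $\sum_{\omega\setminus X'}\mb a=\sum\mb a$, so that $(\mb a,\rho_+(X'))\in S_e$. The reverse connection $\srSS^{cc,\ac 0}_l\preceq\srSS^{cc}_e$ uses the same pair of maps and is symmetric: if $\sum_{X'}\mb a$ tends to the common limit $r=\sum\mb a$, then \Cref{lmm:sum decomposition} gives $\sum_{\omega\setminus X'}\mb a=r-r=0$, so $(\mb a,\rho_+(X'))\in S^{\ac 0}_l$.

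Combining the two Tukey connections yields $\srSS^{cc}_e\equiv\srSS^{cc,\ac 0}_l$, and since Tukey equivalent systems share both their norm and the norm of their dual, the two stated equalities $\ssn_e=\ssn_l^{\ac 0}$ and $\ssn_e^\bot=\ssn_l^{\ac 0\bot}$ both fall out at once. There is no real obstacle here: the substantive content lies in \Cref{lmm:sum decomposition}, and this corollary is simply the observation that complementation on $\icoi$ interchanges the two convergence conditions.
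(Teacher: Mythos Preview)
Your proposal is correct and matches the paper's proof essentially verbatim: the paper also takes $\rho_-$ to be the identity on $\fr S_{cc}$ and $\rho_+$ to be complementation $X\mapsto\omega\setminus X$, invokes \Cref{lmm:sum decomposition} to see that $\sum_X\mb a=\sum\mb a$ iff $\sum_{\omega\setminus X}\mb a=0$, and then observes that the same pair $(\rho_-,\rho_+)$ works for both directions of the Tukey equivalence.
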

\begin{proof}
	We will give a Tukey connection $\sr S_e^{cc}\preceq\sr S_l^{cc,\ac 0}$ consisting of $\rho_-:\fr S_{cc}\to\fr S_{cc}$ being the identity function and $\rho_+:\icoi\to\icoi$ mapping $X\mapsto\omega\setminus X$. \Cref{lmm:sum decomposition} shows us that $\sum_X\mb a=\sum\mb a$ if and only if $\sum_{\omega\setminus X}\mb a=0$. 
	
	Note that the same pair $(\rho_-,\rho_+)$ is also a Tukey connection for $\sr S_l^{cc,\ac 0}\preceq\sr S_e^{cc}$.
\end{proof}

\begin{thm}
	$\ssn_e=\ssn_l^A=\fr c$  for all $A\in[\bb R]^{<\fr c}$ and $\ssn_e^\bot={\ssn_l^{B\bot}}=2$ for all $B\in[\bb R]^{<\omega}$.
\end{thm}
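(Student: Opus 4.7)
The statement asserts four bounds. By \Cref{lmm:ss_e and 0}, the cardinals $\ssn_e$ and $\ssn_e^\bot$ reduce to $\ssn_l^{\{0\}}$ and $\ssn_l^{\{0\}\bot}$ respectively, so it suffices to prove $\ssn_l^A \leq \fr c$, $\ssn_l^{B\bot} \geq 2$, $\ssn_l^{B\bot} \leq 2$ and $\ssn_l^A \geq \fr c$ under the stated cardinality constraints on $A$ and $B$ (and, to avoid trivialities, assuming $A$ and $B$ are nonempty).

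For $\ssn_l^A \leq \fr c$ I would show that $\icoi$ itself witnesses: given any $\mb a \in \fr S_{cc}$ and any $r \in A$, a standard Riemann-style greedy construction produces $X \in \icoi$ with $\sum_X \mb a = r$. One alternates between including enough positive-indexed terms (those in $P_{\mb a}$) to overshoot $r$ and enough negative-indexed terms (those in $N_{\mb a}$) to undershoot it, exploiting $\sum_{P_{\mb a}} \mb a = \infty$, $\sum_{N_{\mb a}} \mb a = -\infty$ and $a_i \to 0$ to guarantee convergence to $r$. Coinfiniteness of $X$ comes for free since infinitely many positive or negative terms get discarded at each ``wrong-sign'' stage of the greedy procedure. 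Exactly the same argument furnishes $\ssn_l^{B\bot} \geq 2$: a single $\mb a \in \fr S_{cc}$ never suffices, because for any $r \in B$ one finds $X \in \icoi$ with $\sum_X \mb a = r \in B$.

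For the dual upper bound $\ssn_l^{B\bot} \leq 2$ with finite $B = \{r_1, \dots, r_n\}$, let $c = \min\{|r_i - r_j| : i \neq j\}$ (taking $c = +\infty$ when $|B| \leq 1$). Pick any $\mb a \in \fr S_{cc}$ and any strictly positive absolutely convergent $\mb e \in {}^\omega\bb Q$ with $\sum \mb e < c$ (for instance $e_i = q \cdot 2^{-i-1}$ for a small enough rational $q > 0$), and set $\mb b = \mb a + \mb e$. Since $\mb e$ is absolutely convergent, $\mb b \in \fr S_{cc}$; and for every $X \in \icoi$ one has
\[\ts\sum_X \mb b - \sum_X \mb a = \sum_X \mb e \in (0, c),\]
so this difference misses $B - B$, forcing $\sum_X \mb a$ and $\sum_X \mb b$ not to lie in $B$ simultaneously. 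Hence $\{\mb a, \mb b\}$ witnesses $\ssn_l^{B\bot} \leq 2$.

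The hard part is $\ssn_l^A \geq \fr c$ for $|A| < \fr c$: given $\mcal X \subset \icoi$ and $A \subset \bb R$ with $|\mcal X|, |A| < \fr c$, produce $\mb a \in \fr S_{cc}$ with $\sum_X \mb a \notin A$ for every $X \in \mcal X$. My plan is to enumerate the forbidden pairs as $\{(X_\eta, r_\eta) : \eta < \mu\}$ with $\mu = |\mcal X \times A| < \fr c$, start from a suitably chosen base $\mb a^{(0)} \in \fr S_{cc}$, and build $\mb a$ by transfinite recursion: at each stage $\eta$ where $\sum_{X_\eta} \mb a^{(\eta)} = r_\eta$, perturb by an absolutely convergent rational correction at a chosen index $i_\eta \in X_\eta$, with amplitude small enough to respect the positive safety margins accumulated for previously dodged pairs. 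The main difficulty is that $\mu$ may be uncountable, so amplitudes cannot simply decay as $2^{-\eta}$; one must reuse indices carefully, ensuring the cumulative perturbation at each $i \in \omega$ is an absolutely convergent rational sum and that conditional convergence of $\mb a$ is preserved in the limit. The plan for taming this is to exploit the cardinality gap $\mu < \fr c = |\fr S_{cc}|$ via a rich parameterisation of $\fr S_{cc}$: one chooses $\mb a^{(0)}$ so that $\{\eta : \sum_{X_\eta} \mb a^{(0)} \in A\}$ is already countable, reducing the task to a countable diagonalisation with geometrically decaying corrections, which completes the construction as in the countable case.
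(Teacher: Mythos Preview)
Your arguments for the three easy bounds are essentially correct, and your treatment of $\ssn_l^{B\bot}\leq 2$ matches the paper's. One caveat: the greedy subseries construction does not give coinfiniteness ``for free''. With the alternating harmonic series and target $r=\sum\mb a$, the greedy rule includes every term, yielding $X=\omega$. This is easily repaired (pad with zeros and exclude them, or deliberately skip every third eligible term), so it is a technicality rather than a real gap.

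The substantive problem is your argument for $\ssn_l^A\geq\fr c$. Your transfinite recursion, as you yourself note, cannot work for uncountable $\mu$: distinct indices $i_\eta\in\omega$ cannot be assigned to uncountably many stages, and reusing indices makes the cumulative perturbation at a single coordinate an uncountable sum. Your proposed escape---choose $\mb a^{(0)}$ via a ``rich parameterisation'' so that only countably many bad pairs remain---is precisely the step that needs an idea, and you do not supply one. In fact the natural parameterisation (and the one the paper uses) already finishes the proof with \emph{no} recursion at all: fix any $\mb a\in\fr S_{cc}$ and set $a^t_i=a_i+t/i^2$. For each fixed $X\in\icoi$ the map $t\mapsto\sum_X\mb a^t=\sum_X\mb a+t\sum_{i\in X}i^{-2}$ is affine with nonzero slope, hence injective; so each pair $(X,r)\in\mcal X\times A$ rules out at most one $t$. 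The set of bad parameters thus has size at most $|\mcal X|\cdot|A|<\fr c$, and any remaining $t$ gives a challenge unmet by $\mcal X$. Once you have this parameterisation, the bad set is empty---not merely countable---and your recursive machinery is both incomplete and unnecessary.
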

\begin{proof}
	Let $\mb a\in\fr S_{cc}$ be arbitrary and for each $t\in\bb R$ let $\mb a^t$ be given by $a^t_i=a_i+\frac t{i^2}$ It is clear that $\mb a^t\in\fr S_{cc}$, since $\sum_{i\in\omega}\frac t{i^2}$ is absolutely convergent, and the termwise sum of a conditionally convergent series with an absolutely convergent series is conditionally convergent. 
	
	Now suppose that $X\in\icoi$ is such that $\sum_X\mb a^t=r$ and $s\in\bb R$ is such that $t\neq s$. Then $\sum_X\mb a^{s}\neq\sum_X\mb a^t$. Therefore $\set{t\in\bb R}{ \exists X\in\mcal X\rb{\sum_X\mb a^t\in A}}$ has cardinality $|\mcal X|\cdot |A|$, and thus it follows that $\ssn_l^A=\fr c$ when $|A|<\fr c$. By \Cref{lmm:ss_e and 0} we also have $\ssn_e=\ssn_l^{\ac 0}=\fr c$.
	
	For the dual cardinal ${\ssn_l^{B\bot}}$, let $\epsilon>0$ be such that $|x-y|\geq\frac{\epsilon\cdot \pi^2}6$ for all $x,y\in B$.  Suppose $\mb a\in\fr S_{cc}$ and let $\mb b$ have $b_i=a_i+\frac{\epsilon}{i^2}$, then $\sum_X\mb b- \sum_X\mb a<\frac{\epsilon\cdot \pi^2}6$ for any $X\in\icoi$. Hence, ${\ssn_l^{B\bot}}=2$ for every finite $B\subset\bb R$ and, by \Cref{lmm:ss_e and 0}, $\ssn_e^\bot={\ssn_l^{\ac 0\bot}}$.
\end{proof}
We note that the above proof shows that ${\ssn_l^{B\bot}}=2$ even for countable $B\subset\bb R$ for which $\inf\set{|x-y|}{x,y\in B\la x\neq y}>0$ holds.

\subsection{Introducing Error Margins}

Let $\epsilon>0$, then we define $\ssn_e^\epsilon$ as the norm of the system $\sr S_e^{cc,\epsilon}=\ab{\fr S_{cc},\icoi,S_e^\epsilon}$ where $(\mb a,X)\in S_e^\epsilon$ if and only if $\sum_X\mb a$ is convergent with a limit in the open interval of length $2\epsilon$ centred around the limit of $\sum\mb a$.

\begin{lmm}\label{epsilon equality}
	Let $\epsilon>0$, then $\ssn_e^\epsilon=\ssn_l^{(-\epsilon,\epsilon)}$ and ${\ssn_e^{\epsilon\bot}}={\ssn_l^{(-\epsilon,\epsilon)\bot}}$.
\end{lmm}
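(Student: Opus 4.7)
The plan is to mimic the proof of Corollary \ref{lmm:ss_e and 0} essentially verbatim, exploiting the fact that the open interval $(-\epsilon,\epsilon)$ is symmetric around $0$. Concretely, I would give the Tukey connection $\srSS_e^{cc,\epsilon}\preceq\srSS_l^{cc,(-\epsilon,\epsilon)}$ via $\rho_-:\fr S_{cc}\to\fr S_{cc}$ being the identity and $\rho_+:\icoi\to\icoi$ defined by $X\mapsto\omega\setminus X$ (which is well-defined because the complement of an infinite coinfinite set is again infinite coinfinite). The very same pair will simultaneously witness $\srSS_l^{cc,(-\epsilon,\epsilon)}\preceq\srSS_e^{cc,\epsilon}$.

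To verify the Tukey condition in the first direction, suppose $\mb a\in\fr S_{cc}$, $X\in\icoi$, and $(\rho_-(\mb a),\rho_+(X))=(\mb a,\omega\setminus X)\in S_l^{(-\epsilon,\epsilon)}$; that is, $\sum_{\omega\setminus X}\mb a$ converges to some $s\in(-\epsilon,\epsilon)$. Writing $r=\sum\mb a$, Lemma \ref{lmm:sum decomposition} yields that $\sum_X\mb a$ converges to $r-s\in(r-\epsilon,r+\epsilon)$, so $(\mb a,X)\in S_e^\epsilon$. The reverse direction is entirely analogous: if $(\mb a,\omega\setminus X)\in S_e^\epsilon$, then $\sum_{\omega\setminus X}\mb a$ converges to some value in $(r-\epsilon,r+\epsilon)$, and applying Lemma \ref{lmm:sum decomposition} once more gives $\sum_X\mb a=r-\sum_{\omega\setminus X}\mb a\in(-\epsilon,\epsilon)$, so $(\mb a,X)\in S_l^{(-\epsilon,\epsilon)}$.

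These two Tukey connections establish Tukey equivalence of the relational systems, which in turn gives equality of both the norms and the norms of the duals. There is no real obstacle here: the only conceptual point is the symmetry of $(-\epsilon,\epsilon)$ around $0$, which is what makes the translation $s\mapsto r-s$ map the interval $(r-\epsilon,r+\epsilon)$ onto $(-\epsilon,\epsilon)$ and vice versa. The involution $X\mapsto\omega\setminus X$ on $\icoi$ then does the rest, exactly as in the proof of Corollary \ref{lmm:ss_e and 0}.
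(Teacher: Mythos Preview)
Your proposal is correct and follows essentially the same route as the paper: the identity on $\fr S_{cc}$ together with complementation $X\mapsto\omega\setminus X$ on $\icoi$ gives a Tukey connection in both directions, via \Cref{lmm:sum decomposition} and the symmetry of $(-\epsilon,\epsilon)$. One small notational slip: in the Tukey formalism the hypothesis should read $(\rho_-(\mb a),X)\in S_l^{(-\epsilon,\epsilon)}$ for $X$ in the target system, not $(\rho_-(\mb a),\rho_+(X))$; since $\rho_+$ is an involution this relabelling is harmless and your verification goes through unchanged.
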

\begin{proof}
	We will show that $\sr S_e^{cc,\epsilon}\eqv\sr S_l^{cc,(-\epsilon,\epsilon)}$. Consider the Tukey connection consisting of $\rho_-:\fr S_{cc}\to\fr S_{cc}$ being the identity function and $\rho_+:\icoi\to\icoi$ mapping $X\mapsto\omega\setminus X$. 
	Suppose $(\mb a,X)\in S_e^\epsilon$, then there is $\delta\in(-\epsilon,\epsilon)$ such that $\delta+\sum_X\mb a=\sum\mb a$. By \Cref{lmm:sum decomposition} we have then that $\sum_{\omega\setminus X}\mb a=-\delta$, and thus $(\mb a,\omega\setminus X)\in S_l^{(-\epsilon,\epsilon)}$. This shows $\sr S_e^{cc,\epsilon}\preceq\sr S_l^{cc,(-\epsilon,\epsilon)}$, but note that the same pair $(\rho_-,\rho_+)$ is also a Tukey connection for $\sr S_l^{cc,(-\epsilon,\epsilon)}\preceq\sr S_e^{cc,\epsilon}$.
\end{proof}

\begin{lmm}
	Let $A,B\subset\bb R$ be bounded open intervals, then $\ssn_l^A=\ssn_l^B$.
\end{lmm}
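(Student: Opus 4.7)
My plan is to establish $\ssn_l^A\leq\ssn_l^B$ and invoke symmetry for the other direction. I will take a witnessing family $\mcal X\subseteq\icoi$ for $\ssn_l^B$ of minimal cardinality and enlarge it to $\mcal X'=\set{Y\mathrel\triangle F}{Y\in\mcal X,\ F\in[\omega]^{<\omega}}$. Since $B$ is bounded, any witness for $\ssn_l^B$ is also a witness for $\ssn_c$, so $|\mcal X|\geq\ssn_c\geq\cov(\mcal M)\geq\aleph_1$, whence $|\mcal X'|=|\mcal X|\cdot\aleph_0=|\mcal X|$. It then suffices to verify that $\mcal X'$ witnesses $\ssn_l^A$.

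To do so, given $\mb a\in\fr S_{cc}$ I first use $\mcal X$ to pick $Y\in\mcal X$ with $s:=\sum_Y\mb a\in B$. For any finite $F\subset\omega$, $\sum_{Y\mathrel\triangle F}\mb a=s+T_F$, where $T_F=\sum_{i\in F\setminus Y}a_i-\sum_{i\in F\cap Y}a_i$. Since $A-s$ is a nonempty bounded open interval, it is enough to find some $F\in[\omega]^{<\omega}$ with $T_F\in A-s$; equivalently, it is enough to show that $\set{T_F}{F\in[\omega]^{<\omega}}$ is dense in $\bb R$.

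For this density claim, I partition the nonzero indices of $\mb a$ into a positive pool $\mcal P=(P_{\mb a}\setminus Y)\cup(N_{\mb a}\cap Y)$ and a negative pool $\mcal N=(P_{\mb a}\cap Y)\cup(N_{\mb a}\setminus Y)$; including $i\in\mcal P$ in $F$ raises $T_F$ by $|a_i|$, while including $i\in\mcal N$ lowers $T_F$ by $|a_i|$, and all contributions $|a_i|$ vanish since $\mb a\in\fr S$. A short case analysis, splitting on whether $\sum_{P_{\mb a}\cap Y}a_i$ is finite (in which case $\sum_{N_{\mb a}\cap Y}a_i$ must be finite too, forcing $\sum_{P_{\mb a}\setminus Y}a_i=\infty$ and $\sum_{N_{\mb a}\setminus Y}a_i=-\infty$ by conditional convergence of $\mb a$) or infinite (in which case the convergence of $\sum_Y\mb a$ forces $\sum_{N_{\mb a}\cap Y}a_i=-\infty$), yields in both cases that $\sum_{i\in\mcal P}|a_i|=\sum_{i\in\mcal N}|a_i|=\infty$.

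A standard Riemann-rearrangement-style greedy procedure then produces, for any $r\in\bb R$ and $\eta>0$, a finite $F$ with $|T_F-r|<\eta$: I alternately add elements from $\mcal P$ or $\mcal N$ in order of their natural index, crossing above $r$ using $\mcal P$ and below $r$ using $\mcal N$, stopping once the most recently added contribution falls below $\eta$. Applying this with $r\in A-s$ and $\eta$ smaller than the distance from $r$ to the endpoints of $A-s$ places $T_F\in A-s$, so $\sum_{Y\mathrel\triangle F}\mb a\in A$ with $Y\mathrel\triangle F\in\mcal X'$, as required. The main obstacle will be the subtle case of the above analysis where $\sum_Y\mb a$ converges only through cancellation between divergent positive and negative pieces within $Y$; this case requires careful bookkeeping to confirm that both pools still yield unbounded signed sums, but once that is done the greedy density argument is routine.
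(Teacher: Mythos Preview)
Your proof is correct and follows essentially the same idea as the paper's argument: close a minimal witness under finite symmetric differences and show that the resulting perturbations $T_F$ can be made to land in the target interval. The paper packages this slightly differently, first reducing via a scaling Tukey connection $\mb a\mapsto\epsilon\cdot\mb a$ to the case where $A$ and $B$ have the same length, and then handling pure translation by introducing the sign-flipped sequence $\mb a'$ (with $a_i'=-a_i$ for $i\in Y$ and $a_i'=a_i$ otherwise) and observing that $\mb a'$ is still conditional---which is exactly your claim that $\sum_{\mcal P}|a_i|=\sum_{\mcal N}|a_i|=\infty$, since $\mcal P=P_{\mb a'}$ and $\mcal N=N_{\mb a'}$. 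Your version is a bit more direct, handling arbitrary bounded open intervals in a single step by proving full density of $\{T_F:F\in[\omega]^{<\omega}\}$ in $\bb R$ rather than merely hitting a translated interval of equal length; the paper's two-stage decomposition is more modular but ultimately does the same work. Your explicit case split on whether $\sum_{P_{\mb a}\cap Y}a_i$ is finite is also more detailed than the paper's one-line justification of the conditionality of $\mb a'$.
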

\begin{proof}
	We will show that $\ssn_l^A\leq\ssn_l^B$ holds in the following two cases:
	\begin{itemize}
		\item (Scaling) If $A=(-1,1)$ and $B=(-\epsilon,\epsilon)$ for some $\epsilon>0$.
		\item (Translation) If $A=(c,c')$ and $B=(b+c,b+c')$ for some $b\in\bb R$.
	\end{itemize}
	This suffices to prove the lemma, as scaling and translation suffice to transform any bounded open interval into any other bounded open interval.
	
	For scaling, we prove $\sr S_l^{cc,(-1,1)}\eqv\sr S_l^{cc,(-\epsilon,\epsilon)}$. Let $\rho_-:\fr S_{cc}\to\fr S_{cc}$ map $\mb a=\abset{a_i}{ i\in\omega}$ to $\mb a^\epsilon=\abset{\epsilon\cdot a_i}{ i\in\omega}$ and let $\rho_+:\icoi\to\icoi$ be the identity. It is clear that  $\sum_X\mb a^\epsilon=\epsilon\cdot\sum_X\mb a$, and thus $\sum_X\mb a^\epsilon\in (-\epsilon,\epsilon)$ if and only if $\sum_X\mb a\in(-1,1)$.
	
	For translation, let $\mcal X$ be a witness for $\ssn_l^{(c,c')}$ and assume without loss of generality that $x\mathbin\triangle X\in\mcal X$ for every $x\in[\omega]^{<\omega}$ and $X\in\mcal X$. Let $\mb a\in\fr S_{cc}$ and find $X\in\mcal X$ such that $\sum_X\mb a\in(c,c')$. Say that $r=\sum_X\mb a$ is the limit value, then we may find some $\epsilon>0$ such that $(r-\epsilon,r+\epsilon)\subset (c,c')$. 
	
	Let $\mb a'$ be defined by $a_i'=a_i$ if $i\notin X$ and $a_i'=-a_i$ otherwise. Since we only change the signs of terms in $X$ and $\sum_X\mb a$ is convergent, it follows that $\sum_{P_{\mb a'}}\mb a'=\infty$ and $\sum_{N_{\mb a'}}\mb a'=-\infty$. Therefore we can find some finite $x\in[\omega]^{<\omega}$ such that $\sum_x\mb a'\in(b-\epsilon,b+\epsilon)$. Note that $\sum_{x\triangle X}\mb a=(\sum_x\mb a')+(\sum_X\mb a)$, because for all $i\in x\triangle X$ we have $a'_i=a_i$ and for all $i\in x\cap X$ we have $a'_i=-a_i$, and thus the latter terms cancel out by $x$ being finite. It follows that 
	\begin{align*}
		\ts\sum_{x\triangle X}\mb a\in (b+r-\epsilon, b+r+\epsilon)\subset (b+c,b+c').
	\end{align*}
	Therefore, since by assumption $x\mathbin\triangle X\in\mcal X$, we see that $\mcal X$ is also a witness for $\ssn_l^B$.
\end{proof}

\begin{thm}
	$\ssn_e^\epsilon=\ssn_l^A=\ssn_c$ for any $\epsilon>0$ and any $A\subset\bb R$ such that $A$ contains a nonempty open interval.
\end{thm}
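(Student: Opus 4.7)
The plan is to chain the two preceding lemmas with a single new Riemann-rearrangement argument. By \Cref{epsilon equality}, $\ssn_e^\epsilon=\ssn_l^{(-\epsilon,\epsilon)}$, and the preceding scaling-and-translation lemma tells us that all cardinals $\ssn_l^I$ coincide for bounded open intervals $I$. Fixing such an $I$ inside a nonempty open interval contained in $A$, the inclusions $S_l^I\subset S_l^A\subset S_c$ provide the easy half $\ssn_c\leq\ssn_l^A\leq\ssn_l^I=\ssn_e^\epsilon$, so the entire theorem reduces to proving $\ssn_l^{(-\epsilon,\epsilon)}\leq\ssn_c$.

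For this direction, I would start from a witness $\mcal X\subset\icoi$ for $\ssn_c$ and, as in the translation lemma, close it under finite symmetric differences $X\mapsto X\triangle x$ with $x\in[\omega]^{<\omega}$; since $\ssn_c$ is uncountable this preserves the cardinality. Given $\mb a\in\fr S_{cc}$, pick $X\in\mcal X$ with $\sum_X\mb a=r\in\bb R$ and borrow the sign-flipping trick from the same lemma: define $\mb a'$ by $a_i'=a_i$ for $i\notin X$ and $a_i'=-a_i$ for $i\in X$. A short case analysis on whether $\sum_X\mb a$ is absolutely or conditionally convergent confirms that $\sum_{P_{\mb a'}}\mb a'=\infty$ and $\sum_{N_{\mb a'}}\mb a'=-\infty$, so $\sum\mb a'$ is conditional, and clearly $\mb a'\in\fr S$. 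A greedy Riemann-type construction then yields a finite $y\subset\omega$ with $\sum_y\mb a'\in(-r-\epsilon,-r+\epsilon)$: drop an initial segment until all later terms have modulus below $\epsilon$, then alternately append the smallest available term from $P_{\mb a'}$ or $N_{\mb a'}$ according as the running partial sum lies below or above $-r$, stopping the first time it enters the window. Setting $X'=X\triangle y$, closure gives $X'\in\mcal X$, finiteness of $y$ keeps $X'\in\icoi$, and a direct calculation shows
\[
\ts\sum_{X'}\mb a=\sum_X\mb a+\sum_y\mb a'\in(-\epsilon,\epsilon),
\]
so $\mcal X$ also witnesses $\ssn_l^{(-\epsilon,\epsilon)}$.

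The only slightly delicate point is verifying $\sum\mb a'$ is conditional in the case where $\sum_X\mb a$ is itself conditionally convergent: there $\sum_{P_{\mb a}\setminus X}\mb a$ is a priori of the indeterminate form $\infty-\infty$, but $\sum_{N_{\mb a}\cap X}(-\mb a)=+\infty$ alone already forces $\sum_{P_{\mb a'}}\mb a'=+\infty$, and symmetrically for the negative part, so the argument goes through. The rest of the proof amounts to standard estimates and set-theoretic bookkeeping, both of which closely mirror what appears in the translation lemma above.
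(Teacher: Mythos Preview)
Your proof is correct, but the key step---showing $\ssn_l^{(-\epsilon,\epsilon)}\leq\ssn_c$---takes a noticeably heavier route than the paper's. You close $\mcal X$ under finite symmetric differences and invoke the sign-flipping trick together with a Riemann-type greedy construction to manufacture a finite correction $y$ that steers $\sum_{X\triangle y}\mb a$ into the target window. The paper instead closes $\mcal X$ only under removing finite initial segments $X\mapsto X\setminus k$ and appeals to nothing more than the Cauchy criterion: since $\sum_X\mb a$ converges, the tails $\sum_{X\setminus k}\mb a$ eventually lie in $(-1,1)$, and one of them is already in $\mcal X$. This bypasses the sign-flipping, the conditionality verification, and the greedy argument entirely. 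Your approach does work---the identity $\sum_{X\triangle y}\mb a=\sum_X\mb a+\sum_y\mb a'$ and the conditionality of $\mb a'$ are both fine---but it is essentially re-running the machinery of the translation lemma where a one-line tail estimate suffices. The remaining reductions (via \Cref{epsilon equality}, the scaling-and-translation lemma, and the sandwich $\ssn_c\leq\ssn_l^A\leq\ssn_l^I$) match the paper's.
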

\begin{proof}
	Firstly, $\ssn_c\leq\ssn_e^\epsilon$ is obvious by \Cref{trivial inference}.
	
	Let first take $A=(-1,1)$ and show that $\ssn_l^A\leq\ssn_c$. Let $\mcal X\subset\icoi$ witness $\ssn_c$, and assume that $X\setminus k\in\mcal X$ for all $k\in\omega$. Let $\mb a\in\fr S_{cc}$ and $X\in\mcal X$ be such that $\sum_X\mb a=r$ is convergent. Then there is some $k\in\omega$ such that $\sum_{X\cap k}\mb a\in (r-1,r+1)$ and such that the partial sum $|\sum_{n\cap X\setminus k}\mb a|<1$ for all $n\geq k$. It then follows that $\sum_{X\setminus k}\mb a\in(-1,1)$. Hence $\mcal X$ is a witness for $\ssn_l^{(-1,1)}$ as well. 
	
	Unfixing $A$, the previous two Lemmas imply that $\ssn_e^\epsilon = \ssn_l^A=\ssn_c$ for all $\epsilon>0$ and bounded open intervals $A\subset\bb R$. Moreover, it is obvious that $\ssn_l^\bb R=\ssn_c$ by definition. If $B\subset A$, then we easily see that $\ssn_l^A\leq\ssn_l^B$. Hence, if $B$ is a (bounded) nonempty open interval, then $\ssn_c=\ssn_l^\bb R\leq\ssn_l^A\leq\ssn_l^B=\ssn_c$.
\end{proof}

Interestingly we have no duality:

\begin{thm}
	${\ssn_e^{\epsilon\bot}}=\aleph_0$ for every $\epsilon>0$.
\end{thm}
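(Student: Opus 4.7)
The plan is to establish both bounds on $\ssn_e^{\epsilon\bot}$ through direct constructions, in each case exploiting a simple perturbation or sparseness argument.

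For the upper bound $\ssn_e^{\epsilon\bot} \leq \aleph_0$, I would fix any $\mb a \in \fr S_{cc}$ with limit $r = \sum \mb a$, and for each $n \in \omega$ define $\mb a^n$ by $a^n_i = a_i$ for $i \neq n$ and $a^n_n = a_n + 2\epsilon$. Perturbing a single term preserves conditional convergence, so each $\mb a^n \in \fr S_{cc}$ with $\sum \mb a^n = r + 2\epsilon$. I claim that the countable family $\mcal A = \set{\mb a^n}{n \in \omega}$ witnesses the upper bound. Fix $X \in \icoi$ and observe that $\sum_X \mb a^n$ equals $\sum_X \mb a + 2\epsilon$ when $n \in X$ and equals $\sum_X \mb a$ when $n \notin X$. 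If $\sum_X \mb a$ diverges, then every $\sum_X \mb a^n$ diverges and $X$ meets no $\mb a^n$. Otherwise, $X$ meeting $\mb a^n$ reduces to $\sum_X \mb a \in (r - \epsilon, r + \epsilon)$ when $n \in X$, and to $\sum_X \mb a \in (r + \epsilon, r + 3\epsilon)$ when $n \notin X$. Since $X$ is both infinite and coinfinite, both kinds of $n$ are available, yet the two open intervals are disjoint, yielding a contradiction.

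For the lower bound $\ssn_e^{\epsilon\bot} \geq \aleph_0$, I would show that no finite family $\mcal A \subset \fr S_{cc}$ defeats every response. Since every $\mb a \in \mcal A$ vanishes at infinity and $\mcal A$ is finite, one can inductively pick an increasing sequence $f_0 < f_1 < f_2 < \ldots$ such that $|a_{f_m}| < 2^{-m-1}\epsilon$ holds simultaneously for all $\mb a \in \mcal A$, while imposing $f_{m+1} \geq f_m + 2$ to guarantee that $F = \set{f_m}{m \in \omega}$ is coinfinite. Then $\sum_F |\mb a| < \epsilon$ for every $\mb a \in \mcal A$, so setting $X = \omega \setminus F \in \icoi$ and applying \Cref{lmm:sum decomposition} yields $|\sum_X \mb a - \sum \mb a| = |\sum_F \mb a| < \epsilon$, so that $X$ meets every $\mb a \in \mcal A$.

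Combining the two bounds gives $\ssn_e^{\epsilon\bot} = \aleph_0$. The argument presents no real obstacle; the entire content of the upper bound lies in the observation that a single-term $2\epsilon$ perturbation shifts the target limit so as to force the shared value $\sum_X \mb a$ into two disjoint open intervals at once.
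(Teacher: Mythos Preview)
Your proof is correct and takes a more elementary route than the paper's. For the upper bound, the paper first passes through the equivalence $\ssn_e^{\epsilon\bot}=\ssn_l^{(-\epsilon,\epsilon)\bot}$ of \Cref{epsilon equality} and then (taking $\epsilon=1$) perturbs a fixed $\mb a\in\fr S_{cc}$ termwise by $\frac{n}{i^2}$, so that $\sum_X\mb a^n=\sum_X\mb a+n\cdot\sum_{i\in X}\frac1{i^2}$ eventually escapes $(-1,1)$; your single-term shift works directly with $S_e^\epsilon$ and avoids the detour through $\ssn_l$, the device of forcing the common value $\sum_X\mb a$ into two disjoint open intervals being quite clean. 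For the lower bound, the paper invokes $\ssn_c^\bot\geq\aleph_0$ (which follows from $\ssn_c^\bot\geq\ssn_{ac}^\bot=\fr b$) to find a single $X$ on which finitely many given series all converge and then trims an initial segment, whereas your direct construction of a sparse coinfinite $F$ with $\sum_F|\mb a|<\epsilon$ is self-contained and bypasses this appeal to earlier results. One small technicality: since the paper takes $\fr S\subset{}^\omega\bb Q$, your perturbed term $a_n+2\epsilon$ need not be rational when $\epsilon\notin\bb Q$; replacing $2\epsilon$ by any rational $q\geq 2\epsilon$ repairs this without altering the argument.
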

\begin{proof}
	For convenience we will take $\epsilon=1$. We already know that ${\ssn_e^{\epsilon\bot}}={\ssn_l^{(-1,1)\bot}}$ from \Cref{epsilon equality}. We show that the latter is equal to $\aleph_0$.
	
	Let $\mb a=\abset{a_i}{ i\in\omega}\in\fr S_{cc}$ and define $\mb a^n$ by $a_i^n=a_i+\frac{n}{i^2}$. Suppose that $X\in\icoi$ is such that $\sum_X\mb a=r\in(-1,1)$ and let $s=\sum_X\frac{1}{i^2}$, which as a subseries of an absolutely convergent series is itself absolutely convergent and clearly nonzero. Hence $\sum_X\mb a^n=r+n\cdot s$. Therefore we can choose $n$ large enough such that $\sum_X\mb a^n\notin(-1,1)$. This shows that ${\ssn_l^{(-1,1)\bot}}\leq\aleph_0$.
	
	In the other direction, suppose $\mb a^0,\dots,\mb a^k\in\fr S_{cc}$, then we find some $X\in\icoi$ such that $\sum_X\mb a^i$ converges for every $0\leq i\leq k$ (using that $\ssn_c^\bot$ is infinite). There must be some $n_i$ such that $\sum_{X\setminus n}\mb a^i\in(-1,1)$ for all $n\geq n_i$. Let $N=\max\set{n_i}{ 0\leq i\leq k}$, then $\sum_{X\setminus N}\mb a^i\in(-1,1)$ for all $0\leq i\leq k$.
\end{proof}

We remark that the first part of the above proof also shows that ${\ssn_l^{A\bot}}\leq\aleph_0$ for any $A\subset\bb R$ that is not dense: for each $q\in\bb Q$ define $\mb a^q$ by $a_i^q=a_i+\frac q{i^2}$, then $\set{\sum_X\mb a^q}{q\in\bb Q}$ is dense for any $X\in\icoi$.

\section{Conditional and Unconditional Series}\label{sec:unconditional series}

Remember that we call $\mb a\in\fr S$ \emph{conditional} if both $\sum_{P_{\mb a}}\mb a$ and $\sum_{N_{\mb a}}\mb a$ tend to infinity. Every series that diverges by oscillation is easily seen to be conditional, but not every convergent series is, as convergent series could be absolutely convergent. A similar thing can be said for series that tend to infinity. Let us say that $\sum\mb a$ tends conditionally to infinity if $\sum\mb a$ tends to infinity and $\mb a$ is conditional, and otherwise we will say that $\sum\mb a$ tends unconditionally to infinity. We introduce the following subsets $\fr S$.
\begin{align*}
	\fr S_n&=\set{\mb a\in\fr S}{ \ts\sum\mb a\text{ is conditional}},\\
	\fr S_{ci}&=\set{\mb a\in\fr S}{ \ts\sum\mb a\text{ tends conditionally to infinity}},\\
	\fr S_{ui}&=\set{\mb a\in\fr S}{ \ts\sum\mb a\text{ tends unconditionally to infinity}}.
\end{align*}

Under our naming convention this provides us with new cardinal characteristics, that we will discuss in the next subsections.

\subsection{Subseries that are Conditional}
Let us consider $\ssn_n$, which is the least cardinality of a set $\mcal X\subset\icoi$ such that for each conditionally convergent $\mb a$ there is some $X\in\mcal X$ such that $\sum_X\mb a$ is still a conditional series. In other words, $\ssn_n$ is the norm of $\srSS^{cc}_n$.

For a rather pathological reason we can see that $\ssn_n$ is finite.

\begin{lmm}\label{finite ssn}
	$\ssn_n=3$
\end{lmm}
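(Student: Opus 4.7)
The plan is to prove both $\ssn_n \leq 3$ and $\ssn_n \geq 3$ separately.

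For the upper bound, I would take any partition $\omega = A_0 \sqcup A_1 \sqcup A_2$ into three infinite sets and set $X_i = \omega \setminus A_i$, each of which lies in $\icoi$ since $A_i$ is infinite and $X_i = A_j \cup A_k$ is an infinite union. Given $\mb a \in \fr S_{cc}$, since $\sum_{P_{\mb a}} \mb a = \sum_{i<3} \sum_{P_{\mb a} \cap A_i} \mb a = \infty$ and all inner sums are over nonnegative terms, there exists $t_P < 3$ with $\sum_{P_{\mb a} \cap A_{t_P}} \mb a = \infty$. Then for every $i \neq t_P$ the inclusion $A_{t_P} \subseteq X_i$ gives $\sum_{P_{\mb a} \cap X_i} \mb a = \infty$ as well, so $\{i<3 : \sum_{P_{\mb a} \cap X_i} \mb a = \infty\}$ has cardinality at least $2$, and symmetrically the analogous set for $N_{\mb a}$ does too. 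Any two subsets of $\{0,1,2\}$ of size at least $2$ must intersect, so some index $i$ satisfies both, giving $\mb a \restriction X_i \in \fr S_n$.

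For the lower bound, given $X_0, X_1 \in \icoi$, I would construct $\mb a \in \fr S_{cc}$ defeating both. Consider the Boolean atoms $A_{00} = X_0 \cap X_1$, $A_{01} = X_0 \setminus X_1$, $A_{10} = X_1 \setminus X_0$, $A_{11} = \omega \setminus (X_0 \cup X_1)$. Coinfiniteness of $X_0$ and $X_1$ forces $A_{10} \cup A_{11}$ and $A_{01} \cup A_{11}$ to be infinite, so either $A_{11}$ is infinite, or both $A_{10}$ and $A_{01}$ are. I pick disjoint infinite $B_P, B_N$ either by splitting $A_{11}$ into two infinite halves, or by setting $B_P = A_{10}$ and $B_N = A_{01}$. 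I then build a conditionally convergent rational series $\mb a$ with $P_{\mb a} = B_P$ and $N_{\mb a} = B_N$ by the standard block construction: enumerate $B_P \cup B_N$ in $\omega$-order with sign sequence $\sigma$, cut this enumeration into finite blocks $I_k$ each containing at least one $+$ and one $-$, assign magnitude $\tfrac{1}{k+1}$ to one canonical $+$-index and one canonical $-$-index of $I_k$, and magnitude $\tfrac{1}{2^n}$ to every other enumerated index $n \in I_k$. Block-wise cancellation of the large pair yields a net block contribution of absolute value at most $\sum_{n \in I_k} 2^{-n}$, which summed over $k$ is bounded by $2$; this makes $\sum \mb a$ converge, while the large entries still give $\sum_{B_P} |\mb a| = \sum_{B_N} |\mb a| = \infty$, so $\mb a \in \fr S_{cc}$.

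A direct inspection then verifies $\mb a \restriction X_j \notin \fr S_n$ for both $j \in \{0,1\}$. In the $A_{11}$-case the supports $B_P, B_N \subseteq A_{11}$ are disjoint from $X_0 \cup X_1$, so both subseries vanish identically and are (absolutely) convergent rather than conditional. In the $A_{10}, A_{01}$-case one has $B_P \cap X_0 = A_{10} \cap X_0 = \emptyset$ and $B_N \cap X_1 = A_{01} \cap X_1 = \emptyset$, so $\mb a \restriction X_0$ has only negative terms and tends to $-\infty$, while $\mb a \restriction X_1$ has only positive terms and tends to $+\infty$; neither is conditional. The main obstacle here is producing a conditionally convergent series with prescribed positive and negative support, and the block-pairing scheme is what resolves it; the key verification to be careful about is that the partial sums of $\sum \mb a$ really converge, which follows from bounding the residual contributions per block by $\sum 2^{-n}$ and summing over $k$.
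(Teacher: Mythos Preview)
Your proof is correct and follows essentially the same approach as the paper. For the upper bound, the paper uses the specific partition by residues mod~$3$ and phrases the pigeonhole step as ``pick $j\in 3\setminus\{i_P,i_N\}$'', but this is identical in content to your observation that two size-$\geq 2$ subsets of $\{0,1,2\}$ must meet. For the lower bound, the paper's two cases are exactly your $A_{11}$ infinite versus $A_{01},A_{10}$ both infinite; the only difference is that the paper simply asserts the existence of a suitable $\mb a\in\fr S_{cc}$ with prescribed support constraints, whereas you spell out an explicit block construction --- this extra care is fine but not needed for the argument.
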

\begin{proof}
	Firstly $\ssn_n>2$. Namely, if we have $\mcal X=\ac{X_0,X_1}\subset\icoi$, then either:
	\begin{itemize}
		\item $Z=\omega\setminus(X_0\cup X_1)$ is infinite, in which case we define $\mb a\in\fr S_{cc}$ such that $P_{\mb a}\subset Z$, then clearly $\mb a\restriction X_i$ contains no positive terms for either $i$, or
		\item $Z_0=X_0\setminus X_1$ and $Z_1=X_1\setminus X_0$ are both infinite, in which case we define $\mb a\in\fr S_{cc}$ such that $P_{\mb a}\subset Z_0$ and $N_{\mb a}\subset Z_1$, then clearly $\mb a\restriction X_0$ contains no negative terms and $\mb a\restriction X_1$ contains no positive terms.
	\end{itemize}
	
	Next we show that the following three sets suffice: let $X_i=\omega\setminus\set{3k+i}{ k\in\omega}$ for each $i\in 3$. Let $\mb a\in\fr S_{n}$, then we consider for each $i\in 3$: 
	\begin{align*}
		P_i=P_{\mb a}\cap\set{3k+i}{ k\in\omega},\\
		N_i=N_{\mb a}\cap\set{3k+i}{ k\in\omega}.
	\end{align*}
	Now we can choose $i_P,i_N\in 3$ such that $\sum_{P_{i_P}}\mb a=\infty$ and $\sum_{N_{i_N}}\mb a=-\infty$. But note that if $j\in 3\setminus\ac{i_P,i_N}$, then $P_{i_P},N_{i_N}\subset X_j$, hence $\sum_{X_j}\mb a$ is conditional.
\end{proof}

\subsection{Unconditional Series as the Set of Challenges}

So far, the subseries numbers have been defined as norms of relational systems $\ab{\fr S_{cc},\icoi,R}$ for some relation $R$. In particular, our set of challenges has always been $\fr S_{cc}$. In this section we briefly consider what happens if we allow for unconditional challenges, specifically the sets $\fr S_{ac}$ and $\fr S_{ui}$.

First, let us remark that if $\mb a\in\fr S_{ac,ui}$, that is, if $\sum\mb a$ is not a conditional series, then $\sum_X\mb a$ is not conditional for any $X\in\icoi$ either. Moreover, subseries of absolutely convergent series are absolutely convergent, thus if $\mb a\in\fr S_{ac}$, then $\mb a\restriction X\in\fr S_{ac}$ as well. This makes $\ssn_{ac}^{ac}=1$ and $\ssn_{ac}^{ac\bot}$ undefined.

It is, however possible for a divergent unconditional series (which necessarily tends to infinity) to have a convergent subseries. The subseries number $\ssn_c^{ui}$ is the least cardinality of a subset $\mcal X\subset \icoi$ such that every series that tends unconditionally to an infinite value has a subseries given by some $X\in\mcal X$ that is convergent. Note that the previous paragraph implies $\ssn^{ui}_c=\ssn^{ui}_{ac}$. It turns out that $\ssn_c^{ui}$ is another characterisation of the dominating number. We omit the proof, as it is almost entirely similar to the proof of \Cref{thm:ac and d}.

\begin{thm}
	$\ssn_c^{ui}=\fr d$ and ${\ssn_c^{ui\bot}}=\fr b$
\end{thm}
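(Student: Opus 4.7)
The plan is to adapt the proof of \Cref{thm:ac and d} almost verbatim, changing only the construction of the challenge series so that it lies in $\fr S_{ui}$ instead of $\fr S_{cc}$.

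For $\srSS_c^{ui}\preceq\sr D$ (which yields $\ssn_c^{ui}\leq\fr d$ and $\fr b\leq{\ssn_c^{ui\bot}}$), I would use the same pair of morphisms as in \Cref{thm:ac and d}. Given $\mb a\in\fr S_{ui}$, the sequence $\mb a$ still vanishes at infinity, so we may define $\rho_-(\mb a)=g\in\omom$ with coinfinite range such that $|a_i|\leq 1/n^2$ for all $i\geq g(n)$; and we set $\rho_+(f)=\ran(f)$ when this is coinfinite, arbitrary otherwise. If $g\leqs f$, then $|a_{f(n)}|\leq 1/n^2$ for almost all $n$, so $\sum_{\ran(f)}\mb a$ is absolutely convergent, in particular convergent.

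For the reverse $\sr D\preceq\srSS_c^{ui}$ (which yields $\fr d\leq\ssn_c^{ui}$ and ${\ssn_c^{ui\bot}}\leq\fr b$), $\rho_+:\icoi\to\omom$ still maps $X$ to its unique increasing enumeration. For $\rho_-$, given strictly increasing $f\in\omom$, I would replace the alternating construction by $\rho_-(f)=\mb a$ where $a_i=1/n$ whenever $i\in[f(n),f(n+1))$ with $n\geq 1$, and $a_i=0$ for $i<f(1)$. Since all terms are nonnegative with $\sum\mb a\geq\sum_{n\geq 1}1/n=\infty$ and $a_i\to 0$, we have $\mb a\in\fr S_{ui}$. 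The block-lower-bound argument from \Cref{thm:ac and d} carries over without modification: assuming $f\nleqs g$, pick $k_n$ with $g(k_n)\leq f(k_n)$ and $k_{n+1}>2k_n$; for $i\in[k_n,k_{n+1})$ we still have $g(i)<f(k_{n+1})$, hence $a_{g(i)}>1/k_{n+1}$, giving $\sum_{i\in[k_n,k_{n+1})}a_{g(i)}\geq 1/2$ and thus $\sum_X\mb a=\infty$.

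There is no conceptual obstacle: the switch from alternating signs to all nonnegative terms preserves both the membership of $\mb a$ in the relevant set of divergent series and the lower bound on $|a_{g(i)}|$ on which the block estimate of \Cref{thm:ac and d} relies.
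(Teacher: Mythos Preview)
Your proposal is correct and is precisely the adaptation the paper has in mind: the paper omits the proof and refers back to \Cref{thm:ac and d}, and you carry out that adaptation verbatim for the direction $\srSS_c^{ui}\preceq\sr D$ and with the obvious modification (dropping the factor $(-1)^i$ so that $\mb a\in\fr S_{ui}$) for the direction $\sr D\preceq\srSS_c^{ui}$. The only cosmetic point is that the inequality $a_{g(i)}>1/k_{n+1}$ in your block estimate needs $g(i)\geq f(1)$, but since $g(i)\geq i\geq k_n$ this holds for all sufficiently large $n$, which is all that is required; this is the same harmless imprecision already present in the proof of \Cref{thm:ac and d}.
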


\section{Bonus: Dualising the Original Subseries Numbers}\label{sec:dual bonus}

In the original subseries paper \cite{Subseries}, several relations between the cardinal characteristics $\ssn_i$, $\ssn_o$, $\ssn_{i,o}$ and various other cardinal characteristics are shown. Although the article does not use the structure of relational systems that we have used in our article, it is not hard to see that almost every proof from the original paper can be written in the form of a Tukey connection. Specifically, the proofs that $\cov(\mcal N)\leq\ssn_{i,o}$, that $\fr s\leq\ssn_{i,o}$, that $\cov(\mcal M)\leq\ssn_i$ and that $\ssn_o\leq\non(\mcal M)$ all can be written using a Tukey connection. Even the theorems connecting the subseries numbers with the rearrangement numbers by $\fr{rr}\leq\max\ac{\fr b,\ssn_{i,o}}$ and $\fr{rr}_i\leq\max\ac{\fr d,\ssn_i}$, can be readily translated into a Tukey connection using a sequential composition (see below).

We therefore observe the following facts about the dual subseries numbers:
\begin{align*}
	&\ssn_{i,o}^\bot\leq \non(\mcal N)&&\ssn_{i,o}^\bot\leq\fr r\\
	&\ssn_i^\bot\leq\non(\mcal M)&&\cov(\mcal M)\leq\ssn_o^\bot\\
	&\min\ac{\fr d,\ssn_{i,o}^\bot}\leq \fr{rr}&&\min\ac{\fr b,\ssn_i^\bot}\leq\fr{rr}_i^\bot
\end{align*}
Some of these statements are made trivial by the fact that $\ssn_i^\bot=4$, as was shown in \cite{Brian2018}. Although it is not known whether $\ssn_i<\fr c$ is consistent, the fact that the dual cardinal $\ssn_i^\bot$ is finite is a strong indicator that $\ssn_i=\fr c$ might be provable in $\msf{ZFC}$.

It is unknown whether $\ssn_o=\ssn_{i,o}$ is provable, nor do we know this about their duals. Nevertheless,  \cite[\S~8]{Subseries} shows a weaker claim, that $\fr b\leq\ssn_{i,o}$ implies $\ssn_o=\ssn_{i,o}$, or succinctly that $\ssn_o\leq\max\ac{\fr b,\ssn_{i,o}}$. The original proof does not yield a Tukey connection, because the cases of divergence by oscillation and tending to infinity are treated separately. 

In this section, we will give a variation of the proof that allows us to state it in terms of Tukey connections, and thus provides a dual result. In order to do this, we must also consider the splitting number $\fr s$, but this will have no effect on the result. We need the following lemma.

\begin{lmm}\label{szo sz lemma}
	Let $\mb a\in\fr S_{cc}$ and $X\in\icoi$. If $\sum_{X}\mb a$ diverges, there is a positive real number $c\in\bb R_{>0}$ for which there are infinitely many disjoint intervals $I_i\subset\omega$ such that one of the following is true:
	\begin{itemize}
		\item $\sum_{n\in I_i\setminus X}a_n<-c$ and $c<\sum_{n\in I_i\cap X}a_n$ for all $i\in\omega$, or
		\item $\sum_{n\in I_i\cap X}a_n<-c$ and $c<\sum_{n\in I_i\setminus X}a_n$ for all $i\in\omega$.\qedhere
	\end{itemize}
\end{lmm}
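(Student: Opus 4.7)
The plan is to pass to the partial sum functions $s_k=\sum_{k\cap X}\mb a$ and $t_k=\sum_{k\cap(\omega\setminus X)}\mb a$, exploit that $s_k+t_k\to L$ where $L=\sum\mb a$, and construct the intervals $I_i=[m_i,n_i)$ inductively so that the partial sum of $\mb a\restriction X$ makes a large positive (or large negative) jump on each $I_i$, while the convergence of $s_k+t_k$ forces a compensating jump of opposite sign on the complement.

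First I would write $\epsilon_k=s_k+t_k-L$ so that $\epsilon_k\to 0$ and, for any interval $I=[m,n)$,
\begin{align*}
\ts\sum_{n'\in I\cap X}a_{n'}&=s_n-s_m,\\
\ts\sum_{n'\in I\setminus X}a_{n'}&=t_n-t_m=-(s_n-s_m)+(\epsilon_n-\epsilon_m).
\end{align*}
So any inequality $s_n-s_m>3c$ automatically yields $t_n-t_m<-3c+|\epsilon_n-\epsilon_m|$, which will be $<-c$ as soon as $m,n$ are large enough, and symmetrically for the other sign. This is the one observation that unifies the three divergence cases.

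Next I would split into cases according to the divergence type of $\sum_X\mb a$. If $\sum_X\mb a=+\infty$, then $s_k\to+\infty$, so fixing any $c>0$ we can inductively pick $m_0<n_0<m_1<n_1<\cdots$ with $s_{n_i}-s_{m_i}>3c$, and the first bullet is witnessed for all but finitely many $i$ by the error estimate above; after dropping an initial segment we obtain infinitely many disjoint $I_i$ satisfying the first bullet. The case $\sum_X\mb a=-\infty$ is symmetric and gives the second bullet. If $\sum_X\mb a=\osc$, set $2d=\limsup_k s_k-\liminf_k s_k>0$ and take $c=d/2$; both values are attained arbitrarily late, so I can inductively pick $m_i$ arbitrarily large with $s_{m_i}<\liminf_k s_k+d/4$ and then $n_i>m_i$ with $s_{n_i}>\limsup_k s_k-d/4$, giving $s_{n_i}-s_{m_i}>3d/2>3c$ and ensuring disjointness by always choosing $m_i$ beyond the previous $n_{i-1}$. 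The error estimate then delivers the first bullet for all but finitely many $i$.

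The part that requires a little care is the oscillation case, since there the quantity $s_n-s_m$ is not automatically large when $n\gg m$; I need to argue that $\limsup$ and $\liminf$ of $s_k$ are each attained infinitely often to drive the inductive choice, and I must fix a consistent sign pattern for the jumps so that the same bullet is witnessed for every $i$ rather than alternating between the two. Once that is pinned down, the uniform constant $c$ from the chosen case serves for all three bullets simultaneously.
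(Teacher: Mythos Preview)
Your approach is essentially the same as the paper's: both exploit that $s_k+t_k$ converges to force a compensating jump on the complement whenever $s_k$ makes a large jump, and both split into the three divergence cases. The paper phrases the key estimate as ``total interval sum is small, hence complement sum $<-c$ whenever $X$-sum $>2c$,'' which is exactly your identity $t_n-t_m=-(s_n-s_m)+(\epsilon_n-\epsilon_m)$ in disguise.

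One genuine technical slip in your oscillation case: you set $2d=\limsup_k s_k-\liminf_k s_k$ and $c=d/2$, then choose $m_i$ with $s_{m_i}<\liminf_k s_k+d/4$. But divergence by oscillation does not force $\limsup$ and $\liminf$ to be finite; for instance $s_k$ could have $\limsup=+\infty$ and $\liminf=-\infty$, in which case $d$ and the threshold $\liminf_k s_k+d/4$ are undefined. The paper avoids this by simply picking some $d\in\bb R$ and $c>0$ such that $s_m<d-c$ infinitely often and $s_m>d+c$ infinitely often, which is always possible once there are two distinct accumulation points. With that replacement your argument goes through unchanged.
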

\begin{proof}
	We know that $\sum_{n\in\omega} a_n$ converges, thus for any $\epsilon>0$ we could find some $m'\in\omega$ for which $\sum_{m'\leq n\leq m}a_n\in(-\epsilon,\epsilon)$ for every $m>m'$. 
	
	If $\sum_{X}\mb a$ diverges by oscillation, find $d\in\bb R$ and $c\in{\bb R}_{>0}$ such that $\sum_{X\cap m}\mb a<d-c$ for infinitely many $m\in \omega$ and $\sum_{X\cap m}\mb a > d+c$ for infinitely many $m\in\omega$. Fix $\epsilon<c$ and $m'\in\omega$ as above, and also fix $m_i,k_i\in\omega$ for each $i\in\omega$ such that for all $i\in\omega$:
	\begin{itemize}
		\item $m'<m_0$,
		\item $m_i<k_i<m_{i+1}$, 
		\item $\sum_{X\cap m_i}\mb a<d-c$, and 
		\item $\sum_{X\cap k_i}\mb a>d+c$. 
	\end{itemize}
	It follows from this that $\sum_{X\cap[m_i,k_i)}\mb a>2c$ and $\sum_{[m_i,k_i)}\mb a<\epsilon<c$ (since $m_i>m'$), and thus $\sum_{ [m_i,k_i)\setminus X}\mb a<-c$. We can therefore take $I_i=[m_i,k_i)$ for each $i\in\omega$ to see that the first bullet point from the lemma holds.
	
	If $\sum_{X}\mb a=\infty$, then we could take any $c\in\bb R_{>0}$ and fix some $\epsilon<c$ and $m'$ as above. We could fix $m_i$ for all $i\in\omega$ such that:
	\begin{itemize}
		\item $m'<m_0$,
		\item $m_i<m_{i+1}$, 
		\item $\sum_{X\cap [m_i,m_{i+1})}\mb a>2c$. 
	\end{itemize}
	For the same reasons as before, we see that $\sum_{[m_i,m_{i+1})\setminus X}\mb a<c$. Therefore we take $I_i=[m_i,m_{i+1})$ for each $i\in\omega$ to see that the first bullet point from the lemma holds.
	
	Finally if $\sum_{X}\mb a=-\infty$, we have the same argument as above (with $\sum_X\mb a=\infty$), except we now let $\sum_{n\in X\cap [m_i,m_{i+1})}a_n < -2c$ and get $\sum_{n\in [m_i,m_{i+1})\setminus X}a_n>c$, meaning that the second bullet point from the lemma holds instead.
\end{proof}

To show the theorem, we will use a Tukey connection between the relational system $\srSS^{cc}_{i,o}$ and a composition of the three systems $\srSS^{cc}_o$, $\sr B$ and $\sr S$. For this we define the \emph{sequential composition} of two relational systems $\sr X=\ab{X,Y,R}$ and $\sr X'=\ab{X',Y',R'}$ as the relational system $\sr X^\frown\sr X'=\ab{X\times {}^YX',Y\times Y',K}$ where $((x,f),(y,y'))\in K$ if and only if $(x,y)\in R$ and $(f(y),y')\in R'$. If $\sr X$ and $\sr X'$ have infinite norms, then $\norm{\sr X{}^\frown\sr X'}=\max\ac{\norm{\sr X},\norm{\sr X'}}$ and $\norm{(\sr X{}^\frown\sr X')^\bot}=\min\ac{\norm{\sr X^\bot},\norm{\sr X'{}^\bot}}$, see also \cite[Thm 4.11]{Blass}.

\begin{thm}\label{szo sz proof}
	$\ssn_o\leq\mrm{max}\ac{\fr{b},\ssn_{i,o}}$ and $\mrm{min}\ac{\fr{d},\ssn_{i,o}^\bot}\leq \ssn_o^\bot$.
\end{thm}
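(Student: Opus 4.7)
The plan is to exhibit a single Tukey connection
\[
\srSS_o^{cc} \;\preceq\; \srSS_{i,o}^{cc}\,{}^\frown\,\sr B'\,{}^\frown\,\sr S,
\]
with $\sr S=\ab{\icoi,\icoi,\text{``is split by''}}$ as introduced in \Cref{sec:cond conv series}. Since all three component systems have infinite norms $\ssn_{i,o},\fr b,\fr s$, the composition has norm $\max\{\ssn_{i,o},\fr b,\fr s\}$ and dual norm $\min\{\ssn_{i,o}^\bot,\fr d,\fr r\}$ by \cite[Thm~4.11]{Blass}. Applying $\fr s\leq\ssn_{i,o}$ and $\ssn_{i,o}^\bot\leq\fr r$ from \cite{Subseries} absorbs the splitting parts and yields both inequalities of the theorem at once.

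I define $\rho_-$ recursively slot-by-slot. Given $\mb a\in\fr S_{cc}$, first submit $\mb a$ to $\srSS_{i,o}^{cc}$. Upon a response $Y$ with $\sum_Y\mb a$ divergent, branch on the case. In the oscillating branch $\sum_Y\mb a=\osc$, flag $(\mb a,Y)$ as done and make the later responses irrelevant (the final output will be $X:=Y$). In the infinite branch (WLOG $\sum_Y\mb a=\infty$, after possibly replacing $\mb a$ by $-\mb a$), invoke a strengthened form of \Cref{szo sz lemma}(a): the proof of that lemma applies with the threshold $2c$ replaced by $4^k$ on the $k$-th iteration, since $\sum_Y\mb a=\infty$ allows arbitrarily large partial-sum targets, yielding disjoint intervals $I_k^{\mb a,Y}$ chosen minimal with $\ts\sum_{I_k^{\mb a,Y}\cap Y}\mb a>4^k$ (so by $\mb a\in\fr S$ the overshoot tends to $0$) and consequently $\ts\sum_{I_k^{\mb a,Y}\setminus Y}\mb a<-4^k$. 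Extend these to an interval partition $\mcal I_{\mb a,Y}\in\mrm{IP}$ and submit to $\sr B'$. Upon $\mcal J\in\mrm{IP}$ with $\mcal I_{\mb a,Y}\sqsubseteq^\infty\mcal J$, form the infinite set $W=\set{n\in\omega}{\exists k,\ I_k^{\mb a,Y}\subset J_n}$ and submit it to $\sr S$.

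Given a splitting $Z$ of $W$, the two pieces $A=W\cap Z$ and $B=W\setminus Z$ are both infinite. For $n\in W$ let $k(n)=\min\{k:I_k^{\mb a,Y}\subset J_n\}$; because $\mcal I$ and $\mcal J$ are interval partitions, $n\mapsto k(n)$ is strictly increasing on $W$. Set
\[
X\;=\;\bigcup_{n\in A}\bigl(I_{k(n)}^{\mb a,Y}\cap Y\bigr)\;\cup\;\bigcup_{n\in B}\bigl(I_{k(n)}^{\mb a,Y}\setminus Y\bigr)\in\icoi.
\]
To verify $\sum_X\mb a=\osc$, enumerate $W=\{m_0<m_1<\ldots\}$ and let $\epsilon_j=\ts\sum_{I_{k(m_j)}^{\mb a,Y}\cap Y}\mb a$ or $\ts\sum_{I_{k(m_j)}^{\mb a,Y}\setminus Y}\mb a$ according to whether $m_j\in A$ or $B$, so that $|\epsilon_j|\in(4^{k(m_j)},\,2\cdot 4^{k(m_j)})$ for $j$ large. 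The partial sum of $\sum_X\mb a$ at the right endpoint of $I_{k(m_n)}^{\mb a,Y}$ equals $S_n=\sum_{j\leq n}\epsilon_j$, and strict increase of $k(m_j)$ gives the geometric bound $\sum_{j<n}|\epsilon_j|<\tfrac{8}{3}\cdot 4^{k(m_{n-1})}\leq\tfrac{2}{3}\cdot 4^{k(m_n)}<|\epsilon_n|$, whence $\mrm{sgn}(S_n)=\mrm{sgn}(\epsilon_n)$ and $|S_n|>\tfrac13\cdot 4^{k(m_n)}$. Infinitude of $A$ and $B$ then forces both $+\infty$ and $-\infty$ as accumulation points of the partial sums of $\sum_X\mb a$, so $(\mb a,X)\in S_o$.

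The main obstacle is the calibration of the growth rate of the interval contributions. Using only the bare \Cref{szo sz lemma}, the constant lower bound $c$ allows a sparse splitting set $Z$ (say, $A=\{$squares$\}$) to produce partial sums of $\sum_X\mb a$ that drift monotonically to $\pm\infty$, giving only a single accumulation point rather than oscillation. Replacing $c$ by the geometric threshold $4^k$ makes each newly captured interval outweigh the sum of all previously captured ones in absolute value, so the sign of $S_n$ is always dictated by the single most recent flip; the splitting property of $Z$ then automatically guarantees that both signs occur for arbitrarily large $n$. The weaker dominance relation $\sqsubseteq^\infty$ built into $\sr B'$ (instead of $\sqsubseteq^*$ in $\sr D'$) is precisely what allows the argument to go through with $\fr b$ rather than $\fr d$.
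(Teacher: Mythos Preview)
Your proposed $\rho_+$ is not a function of the response triple alone: it depends on the challenge $\mb a$. You branch on whether $\sum_Y\mb a=\osc$ (outputting $X:=Y$) or $\sum_Y\mb a=\pm\infty$, and in the latter case your output
\[
X=\bigcup_{n\in A}\bigl(I_{k(n)}^{\mb a,Y}\cap Y\bigr)\cup\bigcup_{n\in B}\bigl(I_{k(n)}^{\mb a,Y}\setminus Y\bigr)
\]
is built from the intervals $I_k^{\mb a,Y}$ that were manufactured inside $\rho_-$. In a Tukey connection $\srSS_o^{cc}\preceq(\srSS_{i,o}^{cc}{}^\frown\sr B'){}^\frown\sr S$ the map $\rho_+$ must have type $\icoi\times\mrm{IP}\times\icoi\to\icoi$; it receives only the responses $(Y,\mcal J,Z)$, never $\mb a$ and never the challenge partition $\mcal I_{\mb a,Y}$. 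As written, two distinct challenges $\mb a,\mb a'$ that happen to be met by the same $(Y,\mcal J,Z)$ would require different outputs $X$, so no single $\rho_+$ exists. This blocks the dual inequality outright, and even on the primal side the family of all such $X$'s may have size $\fr c$, so you do not get a witness of the required cardinality either.

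The paper avoids both problems. First, \Cref{szo sz lemma} is stated so that its conclusion (disjoint intervals $K_n$ carrying opposite-sign contributions of magnitude $>c$ on $X$ and on $\omega\setminus X$) holds uniformly for \emph{every} divergent subseries, whether by oscillation or by tending to $\pm\infty$; thus the case split is absorbed entirely into the definition of $f$ inside $\rho_-$ and never surfaces in $\rho_+$. Second, and this is the crucial structural point, the paper's $\rho_+$ is written purely in terms of the \emph{response} partition $\mcal I=\abset{I_k}{k\in\omega}$ returned by $\sr B'$:
\[
\rho_+(X,\mcal I,S)=\Bigl(\,\bigcup_{k\in S}I_k\cap X\Bigr)\cup\Bigl(\,\bigcup_{k\notin S}I_k\setminus X\Bigr),
\]
which is visibly a function of $(X,\mcal I,S)$ alone and covers all of $\omega$. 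The lemma-intervals $K_n$ appear only in $f(X)$ and $g(X,\mcal I)$, and the verification locates, for each $k$ in the split set $g(X,\mcal I)$, a $K_n\subset I_k$ whose contribution to the output has the right sign. Your geometric calibration $4^k$ was engineered to repair a sparseness problem that arises precisely because your $X$ discards everything outside the selected $I_{k(n)}^{\mb a,Y}$; the paper's $\rho_+$ does not discard anything, and a fixed constant $c$ from the lemma is used instead.
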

\begin{proof}
	Remember the systems $\sr S=\ab{\icoi,\icoi,\ap{\text{is split by}}}$ and $\sr B'=\ab{\mrm{IP},\mrm{IP},\sqsubseteq^\infty}$, that we introduced in \Cref{sec:cond conv series}. We will give a Tukey connection $\srSS^{cc}_o\preceq(\srSS^{cc}_{i,o}{}^\frown \sr B'){}^\frown\sr S$. We will explicitly write out the relational system for $(\srSS^{cc}_{io}{}^\frown \sr B'){}^\frown\sr S$ below:
	\begin{align*}
		\ab{\fr S_{cc}\times {}^{\icoi} \mrm{IP}\times{}^{\icoi\times\mrm{IP}}\icoi,\quad
			\icoi\times\mrm{IP}\times\icoi,\quad 
			Z}
	\end{align*}
	Here $Z$ is the defined for $\mb a\in\fr S_{cc}$, functions $f:{\icoi}\to{\mrm{IP}}$ and $g:{\icoi}\times{\mrm{IP}}\to{\icoi}$, interval partition $\mcal I=\abset{I_k}{ k\in\omega}$ and sets $X,S\in\icoi$, by $((\mb a,f,g),(X,\mcal I,S))\in Z$ if and only if all of the following hold:
	\begin{itemize}
		\item $\sum_X\mb a$ diverges,
		\item for interval partition $f(X)=\abset{J_n}{ n\in\omega}$, there are infinitely many $k\in\omega$ such that $J_n\subset I_k$ for some $n\in\omega$, and
		\item $g(X,\mcal I)$ is split by $S$.
	\end{itemize}
	The Tukey connection is witnessed by functions $\rho_-:\fr S_{cc}\to\fr S_{cc}\times {}^{\icoi} \mrm{IP}\times{}^{\icoi\times\mrm{IP}}\icoi$ and $\rho_+:\icoi\times\mrm{IP}\times\icoi\to\icoi$.
	
	Of these two functions $\rho_+$ is the easier one: let $X,S\in\icoi$ and $\mcal I=\abset{I_k}{ k\in\omega}\in\mrm{IP}$, then we define $\rho_+$ as:
	\begin{align*}
		\rho_+(X,\mcal I,S)=\rb{\ts\Cup_{k\in S}I_k\cap X}\cup\rb{\ts\Cup_{k\notin S}I_k\setminus X}.
	\end{align*}
	
	To define $\rho_-$, we let $\rho_-(\mb a)=(\mb a,f,g)$, where $f$ and $g$ are defined as below.
	
	Let $X\in[\omega]^\omega_\omega$ and $\mcal I=\abset{I_k}{ k\in\omega}\in\mrm{IP}$. If $\sum_{X}\mb a$ converges, we take any arbitrary value for $f(X)$ and $g(X,\mcal I)$, as this case will be irrelevant. Therefore, assume that $\sum_{X}\mb a$ diverges. By \Cref{szo sz lemma} we can find a family of disjoint intervals $\set{K_n}{ n\in\omega}$ and a positive real number $c\in\bb R_{>0}$ such that $\sum_{K_n\setminus X}\mb a<-c$ and $c<\sum_{K_n\cap X}\mb a$ for all $n\in\omega$ or such that $\sum_{K_n\cap X}\mb a<-c$ and $c<\sum_{K_n\setminus X}\mb a$ for all $n\in\omega$. We will assume without loss of generality that the first is the case and we will also assume that $\mrm{min}(K_n)<\mrm{min}(K_{n+1})$ for all $n\in\omega$. We let $f(X)=\abset{J_n}{ n\in\omega}$ where $J_n=[\min(K_n),\min(K_{n+1}))$.
	
	In case there are only finitely many $k\in\omega$ such that $J_n\subset I_k$ for some $n\in\omega$, we let $g(A,B)$ be arbitrary, as this case will once again be irrelevant. Otherwise we will define $g(X,\mcal I)=\set{k\in\omega}{ \exists n\in\omega(J_n\subset I_k)}$.
	
	Now we show that this is indeed a Tukey connection. 
	Let $C=\rho_+(X,\mcal I,S)$. Suppose that $((\mb a,f,g),(X,\mcal I,S))\in Z$, then $\sum_X\mb a$ diverges, thus $f(X)=\abset{J_n}{ n\in\omega}$ is defined non-arbitrarily as above, and there are infinitely many $k\in\omega$ such that $J_n\subset I_k$ for some $n\in\omega$, thus also $g(X,\mcal I)=Y$ is defined non-arbitrarily. Finally we have that $Y$ is split by $S$, thus $Y\cap S$ and $Y\setminus S$ are both infinite. 
	
	If $k\in Y\cap S$, then by definition of $\rho_+$ we have $I_k\cap X=I_k\cap C$. Furthermore, by definition of $g$ there is some $n\in\omega$ such that $J_n\subset I_k$, and since we defined $J_n=[\min(K_n),\min(K_{n+1}))$ and the intervals $K_n$ and $K_{n+1}$ are disjoint, it follows that $K_n\cap X=K_n\cap C$. We chose $K_n$ such that $\sum_{K_n\cap X}\mb a>c$, hence we see that $\sum_{K_n\cap C}\mb a>c$

	On the other hand, if $k\in Y\setminus S$, then $I_k\setminus X=I_k\cap C$. By the same argument as above, we find some $n\in\omega$ such that $\sum_{K_n\setminus X}\mb a=\sum_{K_n\setminus C}\mb a<-c$.
	
	We conclude that $\sum_C\mb a$ diverges by oscillation. Therefore, our Tukey connection has shown that $\ssn_o\leq \norm{(\srSS^{cc}_{i,o}{}^\frown {\sr B}'){}^\frown\sr S}=\mrm{max}\ac{\ssn_{i,o},\fr b,\fr s}=\mrm{max}\ac{\ssn_{i,o},\fr b}$, where the last equality follows from $\fr s\leq\ssn_{i,o}$. Dually we get $\ssn_o^\bot\geq\mrm{min}\ac{\ssn_{i,o}^\bot,\fr d}$.
\end{proof}

\begin{crl}
	If $\fr b\leq \ssn_{i,o}$, then $\ssn_{i,o}=\ssn_o$. Dually, if $\fr d\geq\ssn_{i,o}^\bot$, then $\ssn_{i,o}^\bot=\ssn_o^\bot$.
\end{crl}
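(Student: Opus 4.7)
The plan is to observe that this corollary follows almost immediately from \Cref{szo sz proof} once one has the reverse inequalities $\ssn_{i,o}\leq\ssn_o$ and $\ssn_o^\bot\leq\ssn_{i,o}^\bot$, so there is no real obstacle; the work is already done in the theorem.

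First I would note that $\fr S_o\subset\fr S_{i,o}$, so \Cref{trivial inference} gives $\srSS_{i,o}^{cc}\preceq\srSS_o^{cc}$, and hence $\ssn_{i,o}\leq\ssn_o$ together with the dual $\ssn_o^\bot\leq\ssn_{i,o}^\bot$. This inequality holds outright in $\msf{ZFC}$, independent of any hypothesis on $\fr b$ or $\fr d$.

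Now suppose $\fr b\leq\ssn_{i,o}$. Then $\max\ac{\fr b,\ssn_{i,o}}=\ssn_{i,o}$, so \Cref{szo sz proof} yields $\ssn_o\leq\ssn_{i,o}$. Combining with the previous step gives $\ssn_o=\ssn_{i,o}$. Dually, suppose $\fr d\geq\ssn_{i,o}^\bot$, so that $\min\ac{\fr d,\ssn_{i,o}^\bot}=\ssn_{i,o}^\bot$. Then \Cref{szo sz proof} gives $\ssn_{i,o}^\bot\leq\ssn_o^\bot$, and combined with $\ssn_o^\bot\leq\ssn_{i,o}^\bot$ we conclude $\ssn_o^\bot=\ssn_{i,o}^\bot$.

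Since everything reduces to unpacking the $\max$ and $\min$ in \Cref{szo sz proof}, the only thing to be careful about is to explicitly record the Fact-based inequality $\ssn_{i,o}\leq\ssn_o$ (and its dual), which is precisely what makes both inequalities in each direction collapse to equalities.
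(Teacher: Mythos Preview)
Your proposal is correct and is exactly the intended derivation: the paper states the corollary without proof, since it follows immediately from \Cref{szo sz proof} together with the trivial inequality $\ssn_{i,o}\leq\ssn_o$ (and its dual) coming from \Cref{trivial inference}, precisely as you spell out.
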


\section{Conclusion and Open Questions}

We may summarise the results from this article in \Cref{fig:diagram}, showing the relative order of the cardinal characteristics mentioned so far compared to the cardinals of the Cicho\'n diagram and $\fr s$ and $\fr r$. In the diagram, we use dotted lines for the relations between classical cardinal characteristics, such as $\fr s$, $\fr d$ or $\cov(\mcal M)$, and dashed arrows are used where it is unknown whether the ordering between the cardinals is consistently strict.

\begin{figure}[ht]\centering
	\begin{tikzpicture}[yscale=1.2,xscale=1.6]
		\node[] (co) at (8.4,8) {$2^{\aleph_0}$};
		\node[] (aN) at (1.2,1) {$\add(\mcal N)$};
		\node[] (p) at (1.2,0) {$\aleph_1$};
		\node[] (s) at (2.2,3) {$\fr s$};
		\node[] (r) at (7.4,5) {$\fr r$};
		\node[] (io) at (2.2,7) {$\ssn_{i,o}$};
		\node[] (io') at (7.4,1) {$\ssn_{i,o}^\bot$};
		\node[] (cN) at (1.2,7) {$\cov(\mcal N)$};
		\node[] (o) at (3,7) {$\ssn_o$};
		\node[] (o') at (6.6,1) {$\ssn_o^\bot$};
		\node[] (i) at (7,8) {$\ssn_i$};
		\node[] (cM) at (5.6,1) {$\cov(\mcal M)$};
		\node[] (c) at (5.7,2) {$\ssn_c$};
		\node[] (c') at (3.9,6) {$\ssn_c^\bot$};
		\node[] (d) at (5.9,4) {$\fr d=\ssn_{ac}=\ssn_{c}^{ui}$};
		\node[] (cc) at (5.8,3) {$\ssn_{cc}$};
		\node[] (cc') at (3.8,5) {$\ssn_{cc}^\bot$};
		\node[] (b) at (3.7,4) {$\fr b=\ssn_{ac}^\bot=\ssn_c^{ui\bot}$};
		\node[] (nM) at (4,7) {$\non(\mcal M)$};
		\node[] (aM) at (3.4,1) {$\add(\mcal M)$};
		\node[] (fM) at (6.2,7) {$\cof(\mcal M)$};
		\node[] (nN) at (8.4,1) {$\non(\mcal N)$};
		\node[] (fN) at (8.4,7) {$\cof(\mcal N)$};

		\draw[->,dotted] (p) -- (aN);
		\draw[->,dotted] (aN) -- (aM);
		\draw[->,dotted] (aM) -- (cM);
		\draw[->,dotted] (aM) -- (b);
		\draw[->,dotted] (nM) -- (fM);
		\draw[->,dotted] (d) -- (fM);
		\draw[->,dotted] (aN) -- (cN);
		\draw[->,dotted] (fM) -- (fN);
		\draw[->,dotted] (nN) -- (fN);
		\draw[->,dashed] (cc) -- (d);
		\draw[->] (s) -- (io);
		\draw[->] (s) -- (cc);
		\draw (s) edge[->,dotted, out=0, in=150] (nN);
		\draw (cN) edge[->,dotted, out=-30, in=180] (r);
		\draw (p) edge[->,dotted,out=45,in=270] (s);
		\draw (r) edge[->,dotted,out=90,in=225] (co);
		\draw[->,dotted] (b) -- (d);
		\draw[->,dashed] (b) -- (cc');
		\draw[->,dashed] (cc') -- (c');
		\draw[->,dashed] (c') -- (nM);
		\draw[->,dashed] (io) -- (o);
		\draw[->] (cN) -- (io);
		\draw[->,dashed] (cM) -- (o');
		\draw[->,dashed] (o') -- (io');
		\draw[->] (io') -- (nN);
		\draw[->] (cc') -- (r);
		\draw[->] (io') -- (r);
		\draw[->,dashed] (cM) -- (c);
		\draw[->] (o) -- (nM);
		\draw[->,dashed] (c) -- (cc);
		\draw[->,dashed] (i) -- (co);
		\draw[->,dotted] (fN) -- (co);
		\draw[] (io) edge[->,out=45, in=180, looseness=0.7] (i);
		\draw[] (cM) edge[->,out=45, in=270, looseness=0.7] (i);
		
	\end{tikzpicture}
	\caption{Relations between our cardinal characteristics}
	\label{fig:diagram}
\end{figure}

We note that because both $\fr s$ and $\cov(\mcal M)$ are lower bounds for $\ssn_{cc}$, that it is consistent that $\fr s<\ssn_{cc}$ or that $\cov(\mcal M)<\ssn_{cc}$. The first holds in the Cohen model, whereas the second holds in the Mathias model (see e.g.\ \cite[\S~11]{Blass} for definitions of these models). Several open questions with regard to consistency remain.

Firstly, many of our subseries numbers were equal to $\ssn_c$, specifically $\ssn_c=\ssn_f=\ssn_e^\epsilon=\ssn_l^{(0,1)}$. Simultaneously, many other subseries numbers were shown to be equal to $\fr d$, specifically $\fr d=\ssn_{ac}=\ssn_c^{ui}=\ssn_f^{ac}$.

Since $\cov(\mcal M)\leq\ssn_c\leq\fr d$ and consistently $\cov(\mcal M)<\fr d$ (e.g.\ in the Miller model), a natural question is:
\begin{qst}
	Which of $\ssn_c<\fr d$ and $\cov(\mcal M)<\ssn_c$ is consistent?
\end{qst}

Natural forcing notions to consider, are the Miller, Mathias and Laver forcing, all of which increase $\fr d$ without increasing $\cov(\mcal M)$. Another candidate is random forcing, which can be used to force $\non(\mcal N)<\cov(\mcal N)$ without affecting the cardinality of $\fr d$ from the ground model.

The situation for $\ssn_{cc}$ is similar. If $\ssn_{cc}<\fr d$ is consistent, then the Miller model would be an excellent candidate to check, since this model also has $\fr s=\aleph_1$. On the other hand, if $\ssn_{cc}=\fr d$ holds in the Miller model, this could indicate that $\ssn_{cc}=\fr d$ is provable in $\msf{ZFC}$.

\begin{qst}
	Is $\ssn_{cc}<\fr d$ consistent? Is $\max\ac{\fr s,\cov(\mcal M)}<\ssn_{cc}$ consistent?
\end{qst}

Naturally the questions above could be asked for the dual cardinal characteristics $\ssn_{cc}^\bot$, $\ssn_c^\bot$ and their relationship to $\non(\mcal M)$ and $\fr b$.


\subsection*{Acknowledgements}

A small part of this paper consists of the original work from my Master's thesis \cite{VlugtMThesis} (particularly the definition of $\ssn_c$ and $\ssn_e$). I would like to thank J\"{o}rg Brendle, who supervised that thesis, for valuable discussions, comments and suggestions, and Jakob Kellner and Martin Goldstern for providing support in Vienna. 

\printbibliography

\end{document}